\newtheorem{propo}{Proposition}[section]
\newtheorem{corol}[propo]{Corollary}
\newtheorem{theor}[propo]{Theorem}
\newtheorem{lemma}[propo]{Lemma}
\theoremstyle{definition}
\newtheorem{defin}[propo]{Definition}
\newtheorem{examp}[propo]{Example}
\theoremstyle{remark}
\newtheorem{remar}[propo]{Remark}
\newcommand{\NN }{\mathbb{N}}
\newcommand{\RR }{\mathbb{R}}
\newcommand{\QQ }{\mathbb{Q}}
\newcommand{\ZZ }{\mathbb{Z}}
\newcommand{\id }{\mathrm{id}}
\DeclareMathOperator{\Aut}{Aut}
\DeclareMathOperator{\GL}{GL}
\DeclareMathOperator{\rank}{rank}
\newcommand{\ndZ }{\mathbb{Z}}
\newcommand{\al }{\alpha }
\DeclareMathOperator{\Vol}{Vol}
\DeclareMathOperator{\Mor}{Mor}
\DeclareMathOperator{\Obj}{Obj}
\DeclareMathOperator{\md}{mod}
\newcommand{\Ic }{\mathcal I}
\newcommand{\RS}{\mathcal{R}}
\newcommand{\Ss}{\mathcal{S}}
\newcommand{\Ac }{\mathcal{A}}
\newcommand{\Kc }{\mathcal{K}}
\newcommand{\Wc }{\mathcal{W}}
\newcommand{\Cc }{\mathcal{C}}
\newcommand{\cEs }{\mathcal{F}}
\newcommand{\rfl }{\rho }
\newcommand{\s }{\sigma }
\newcommand{\coord}{\Upsilon}
\definecolor{darkgreen}{rgb}{0.0,0.1,0.6}
\newcommand{\df}[1]{{\bf\color{darkgreen} #1}}
\title[A bound for crystallographic arrangements]
{A bound for crystallographic arrangements}
\author{Michael~Cuntz}
\address{Michael Cuntz, Leibniz Universit\"at Hannover,
Institut f\"ur Algebra, Zah\-lentheorie und Diskrete Mathematik,
Fakult\"at f\"ur Mathematik und Physik,
Wel\-fengarten 1,
D-30167 Hannover, Germany}
\email{cuntz@math.uni-hannover.de}
\begin{document}

\keywords{simplicial arrangement, reflection group, Weyl group, Weyl groupoid}
\subjclass[2010]{20F55, 52C35, 14N20}

\begin{abstract}
A crystallographic arrangement is a set of linear hyperplanes satisfying a certain integrality property and decomposing the space into simplicial cones. Crystallographic arrangements were completely classified in a series of papers by Heckenberger and the author. However, this classification is based on two computer proofs checking millions of cases. In the present paper, we prove without using a computer that, up to equivalence, there are only finitely many irreducible crystallographic arrangements in each rank greater than two.
\end{abstract}

\maketitle

\section{Introduction}

A simplicial arrangement is a finite set of linear hyperplanes in a real vector space which decomposes its complement into open simplicial cones, cf.\ \cite{a-Melchi41}.
Simpliciality is thus the extreme case when every chamber in a real hyperplane arrangement has the smallest possible number of walls.
It is not surprising that some of the most prominent arrangements, as for example the real reflection arrangements are simplicial.
Apart from a catalogue by Gr\"unbaum \cite{p-G-09}, some more results such as \cite{p-C12}, and the seminal result by Deligne \cite{MR0422673}, not much is known about simplicial arrangements in general.
Another way to obtain nice results is to restrict to smaller classes of arrangements, as for example in \cite{CM17} where all supersolvable simplicial arrangements are classified, or to consider larger classes as for instance infinite arrangements with similar properties, see \cite{CMW}.

Motivated by certain quantum groups, Heckenberger and the author classified another smaller class, the so called crystallographic arrangements. These are simplicial arrangements in a lattice with an additional saturation property (see Definition \ref{cryst:arr}). For instance, Weyl arrangements are crystallographic.
Although this sounds very special, notice that it appears that the class of crystallographic arrangements is not so small compared to the class of all simplicial arrangements: almost all of the known rational simplicial arrangements are crystallographic, and the number of known non-rational sporadic simplicial arrangements is very small.

\begin{theor}[cf.\ {\cite{p-CH09a}, \cite{p-CH09b}, \cite{p-CH09c}, \cite{p-CH09d}, \cite{p-CH10}, \cite{p-C10}}]\label{cryarrclas}
There are (up to equivalence) exactly three families of irreducible crystallographic arrangements:
\begin{enumerate}
\item The family of rank two parametrized by triangulations of convex $n$-gons by non-intersecting diagonals.
\item For each rank $r>2$, arrangements of type $A_r$, $B_r$, $C_r$
and $D_r$, and a further series of $r-1$ arrangements.
\item Another $74$ ``sporadic'' arrangements of rank $r$, $3\le r \le 8$.
\end{enumerate}
\end{theor}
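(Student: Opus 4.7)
The plan is to handle the rank two and higher rank cases separately, since they exhibit fundamentally different phenomena: rank two admits an infinite family parametrized by combinatorial data, while each higher rank contains only finitely many irreducible crystallographic arrangements.

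For part~(1), I would appeal to a Conway--Coxeter style correspondence. A rank two crystallographic arrangement is encoded by its quiddity cycle $(c_1, \ldots, c_n)$ of positive integers satisfying a closure relation in $\SLZ$. The classical bijection between such cycles and triangulations of a convex $(n+2)$-gon by non-intersecting diagonals then yields the parametrization claimed.

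For parts~(2) and~(3), which together assert that each rank $r > 2$ admits only finitely many irreducible crystallographic arrangements, I would focus on finiteness and leave the explicit enumeration to the cited prior work. The crucial reduction is that every crystallographic arrangement corresponds to a finite connected Cartan scheme with a finite Weyl groupoid; to bound the number of such schemes in fixed rank $r$, it suffices to bound the entries of the Cartan matrices, since then the number of possible matrices at each object is finite, and the number of objects of the groupoid is controlled by the number of chambers, itself bounded in terms of the rank and the entry bound.

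To bound the Cartan entries, I would use rank two localizations. Any two simple roots $\alpha_i, \alpha_j$ at a given object generate a rank two parabolic subgroupoid, which is itself finite crystallographic and hence corresponds to a quiddity cycle containing the entry $|c_{ij}|$. For $r \ge 3$, the additional requirement that these rank two subarrangements glue consistently through a third simple root $\alpha_k$ imposes strong compatibility constraints. The main obstacle, and presumably the core of the paper, is to show that these gluing conditions force $|c_{ij}|$ to be uniformly bounded by a constant depending only on $r$; since long quiddity cycles could a priori yield arbitrarily large entries, the argument must exploit the interaction of at least three simple roots, plausibly by assuming an entry is too large and propagating through reflections to produce an inadmissible configuration in some rank two or rank three localization. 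Once this effective bound is obtained without computer aid, finiteness in each rank follows, and combining it with the explicit lists in the cited references completes parts~(2) and~(3).
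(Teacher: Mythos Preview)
First, note that the paper does not itself prove this classification theorem; it is quoted from the cited references and relies on computer enumeration. What the paper actually proves is the finiteness corollary (Theorem~\ref{mainthm}): for each rank $r>2$ there are only finitely many equivalence classes. So your proposal should be compared against the paper's argument for that finiteness statement.

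Your outline for rank two matches the paper (Theorem~\ref{thmranktwo}).

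For rank $r\ge 3$, your proposal contains a genuine gap. You write that bounding the Cartan entries suffices because ``the number of objects of the groupoid is controlled by the number of chambers, itself bounded in terms of the rank and the entry bound.'' This inference is unjustified: the paper explicitly observes, right after Theorem~\ref{bound7}, that there are infinitely many non-equivalent crystallographic arrangements of rank two with all Cartan entries $\ge -7$. A bound on Cartan entries therefore does not by itself bound the number of chambers, and you give no argument why rank $\ge 3$ should behave differently.

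The paper's route from the Cartan bound to finiteness is indirect and requires additional ideas you have not anticipated. The Cartan bound (Theorem~\ref{bound7}) is used, via Proposition~\ref{plane001} and a congruence argument, to bound the size of rank two localizations inside rank three arrangements (Theorem~\ref{b128}); this in turn yields a uniform bound $m$ on $\Vol_2(\alpha,\beta)$ for any two roots in any rank $r>2$ (Corollary~\ref{boundvolm}). Finiteness of $|R^K_+|$ then follows by pigeonhole: if there were more than $(m+1)^r$ positive roots, two of them would agree modulo $m+1$ in every coordinate, forcing $\Vol_2$ of that pair to be divisible by $m+1$, a contradiction. Finally, Theorem~\ref{sumofroots} converts the bound on the number of roots into a bound on the number of arrangements. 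Your proposal is missing this entire chain between the Cartan bound and finiteness.
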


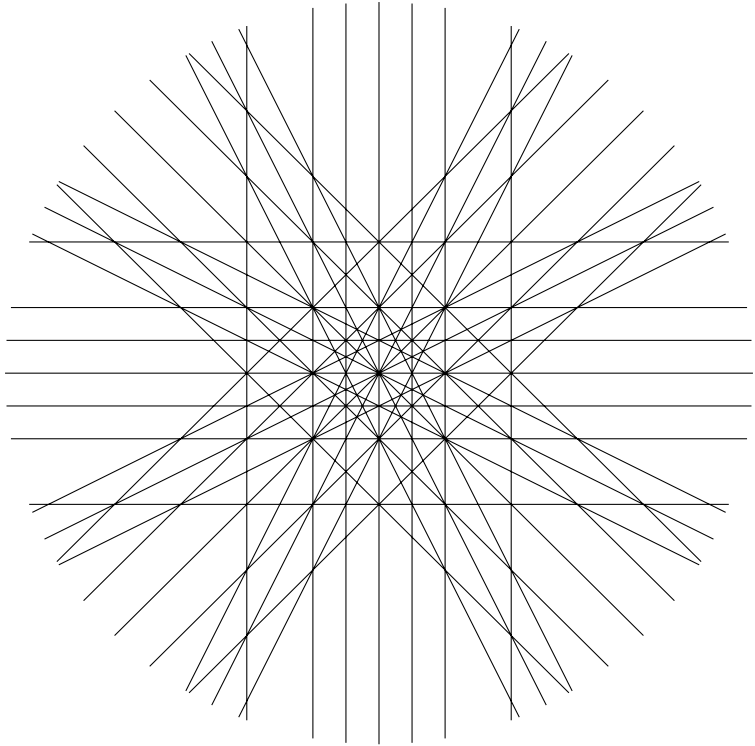
\begin{figure}
\begin{center}
\setlength{\unitlength}{0.7pt}
\begin{picture}(600,400)(0,200)
\moveto(500.000000000000000000000000000,400.000000000000000000000000000)\lineto(100.000000000000000000000000000,400.000000000000000000000000000)
\moveto(478.885438199983175712733893499,310.557280900008412143633053251)\lineto(121.114561800016824287266106501,489.442719099991587856366946749)
\moveto(441.421356237309504880168872421,258.578643762690495119831127579)\lineto(158.578643762690495119831127579,541.421356237309504880168872421)
\moveto(300.000000000000000000000000000,600.000000000000000000000000000)\lineto(300.000000000000000000000000000,200.000000000000000000000000000)
\moveto(441.421356237309504880168872421,541.421356237309504880168872421)\lineto(158.578643762690495119831127579,258.578643762690495119831127579)
\moveto(478.885438199983175712733893499,489.442719099991587856366946749)\lineto(121.114561800016824287266106501,310.557280900008412143633053251)
\moveto(389.442719099991587856366946749,221.114561800016824287266106501)\lineto(210.557280900008412143633053251,578.885438199983175712733893499)
\moveto(389.442719099991587856366946749,578.885438199983175712733893499)\lineto(210.557280900008412143633053251,221.114561800016824287266106501)
\moveto(403.304908124365989904268758083,571.254477189404603588495298060)\lineto(224.979363123095911071765016402,214.603387186864445923487814698)
\moveto(317.677669529663688110021109053,599.217218131365342082889509328)\lineto(317.677669529663688110021109053,200.782781868634657917110490672)
\moveto(403.304908124365989904268758083,228.745522810595396411504701939)\lineto(224.979363123095911071765016402,585.396612813135554076512185302)
\moveto(485.396612813135554076512185302,475.020636876904088928234983598)\lineto(128.745522810595396411504701939,296.695091875634010095731241917)
\moveto(457.989821533686490069411686515,522.634482474359113849369468410)\lineto(177.365517525640886150630531591,242.010178466313509930588313485)
\moveto(335.355339059327376220042218105,596.850196850295275492125861220)\lineto(335.355339059327376220042218105,203.149803149704724507874138780)
\moveto(457.989821533686490069411686515,277.365517525640886150630531590)\lineto(177.365517525640886150630531591,557.989821533686490069411686515)
\moveto(485.396612813135554076512185302,324.979363123095911071765016402)\lineto(128.745522810595396411504701939,503.304908124365989904268758083)
\moveto(499.217218131365342082889509328,382.322330470336311889978890947)\lineto(100.782781868634657917110490672,382.322330470336311889978890947)
\moveto(496.850196850295275492125861221,364.644660940672623779957781895)\lineto(103.149803149704724507874138780,364.644660940672623779957781895)
\moveto(496.850196850295275492125861221,435.355339059327376220042218105)\lineto(103.149803149704724507874138780,435.355339059327376220042218105)
\moveto(499.217218131365342082889509328,417.677669529663688110021109053)\lineto(100.782781868634657917110490672,417.677669529663688110021109053)
\moveto(471.254477189404603588495298060,296.695091875634010095731241917)\lineto(114.603387186864445923487814698,475.020636876904088928234983598)
\moveto(422.634482474359113849369468409,242.010178466313509930588313485)\lineto(142.010178466313509930588313485,522.634482474359113849369468410)
\moveto(264.644660940672623779957781895,596.850196850295275492125861220)\lineto(264.644660940672623779957781895,203.149803149704724507874138780)
\moveto(422.634482474359113849369468409,557.989821533686490069411686515)\lineto(142.010178466313509930588313485,277.365517525640886150630531590)
\moveto(471.254477189404603588495298060,503.304908124365989904268758083)\lineto(114.603387186864445923487814698,324.979363123095911071765016402)
\moveto(375.020636876904088928234983598,214.603387186864445923487814698)\lineto(196.695091875634010095731241917,571.254477189404603588495298060)
\moveto(282.322330470336311889978890947,599.217218131365342082889509328)\lineto(282.322330470336311889978890947,200.782781868634657917110490672)
\moveto(375.020636876904088928234983598,585.396612813135554076512185302)\lineto(196.695091875634010095731241917,228.745522810595396411504701939)
\moveto(472.285978435618904584284663806,501.575300316964152144200227595)\lineto(198.424699683035847855799772405,227.714021564381095415715336194)
\moveto(370.710678118654752440084436211,587.082869338697069279187436616)\lineto(370.710678118654752440084436211,212.917130661302930720812563384)
\moveto(472.285978435618904584284663806,298.424699683035847855799772405)\lineto(198.424699683035847855799772405,572.285978435618904584284663806)
\moveto(487.082869338697069279187436616,329.289321881345247559915563789)\lineto(112.917130661302930720812563384,329.289321881345247559915563789)
\moveto(487.082869338697069279187436616,470.710678118654752440084436211)\lineto(112.917130661302930720812563384,470.710678118654752440084436211)
\moveto(401.575300316964152144200227595,227.714021564381095415715336194)\lineto(127.714021564381095415715336194,501.575300316964152144200227595)
\moveto(229.289321881345247559915563789,587.082869338697069279187436616)\lineto(229.289321881345247559915563789,212.917130661302930720812563384)
\moveto(401.575300316964152144200227595,572.285978435618904584284663806)\lineto(127.714021564381095415715336194,298.424699683035847855799772405)
\strokepath
\end{picture}
\end{center}
\caption{The crystallographic arrangement of rank three with $37$ hyperplanes.\label{cry37}}
\end{figure}

The proof of this classification relies on enumerations by the computer. In rank three, approximately $60.000.000$ cases need to be considered and $55$ such arrangements are found, the largest one having $37$ hyperplanes (see Figure \ref{cry37}).
A ``short'' proof would be a great improvement, even if the number of cases was only reduced to several thousands.
In this article, we prove the following corollary to the above theorem without the use of a computer:

\begin{theor}[Theorem \ref{mainthm}]\label{mainth}
Let $r>2$. Then there are only finitely many equivalence classes of irreducible crystallographic arrangements of rank $r$.
\end{theor}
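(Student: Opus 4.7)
My plan is to pass to the equivalent language of finite Cartan schemes (Weyl groupoids): an irreducible crystallographic arrangement of rank $r$ corresponds, up to equivalence, to a connected finite Cartan scheme of rank $r$. Such a scheme attaches to every object (chamber) $c$ a generalized Cartan matrix $C^c = (c^c_{ij}) \in \ZZ^{r\times r}$ with $c^c_{ii} = 2$ and $c^c_{ij} \le 0$ for $i \ne j$, and specifies mutation maps $\rho_i\colon c \to c'$ between neighbouring objects. It suffices to give two explicit bounds depending only on $r$: (a) a bound on the entries $|c^c_{ij}|$, and (b) a bound on the number of objects (equivalently, on the number of hyperplanes). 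Together these constrain the data to a finite set of labelled combinatorial structures, which gives the theorem.

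For (a), I would prove the key lemma: if $r \ge 3$, then $c^c_{ij}\, c^c_{ji} \in \{0,1,2,3\}$ for every object $c$ and all $i\ne j$; equivalently, every rank-$2$ parabolic is of finite spherical type $A_1\times A_1$, $A_2$, $B_2$, or $G_2$. The bound fails in rank $2$, where arbitrarily large products occur along the infinite family in Theorem~\ref{cryarrclas}(1), so its proof must genuinely exploit $r\ge 3$. To establish it at $r=3$, one uses the extra simple root $\al_k^c$: if $c^c_{ij}c^c_{ji}\ge 4$, I would track the orbit of $\al_k^c$ under the mutation cycle defined by $(i,j)$ and show, using integrality and saturation, that infinitely many distinct real roots are produced, contradicting the assumption that the arrangement has finitely many hyperplanes. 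For $r>3$, the bound reduces to $r=3$ by restriction to the rank-$3$ parabolic subarrangement spanned by any triple $\{\al_i^c, \al_j^c, \al_k^c\}$, which is itself crystallographic. Making the rank-$3$ dichotomy precise—in particular, handling the Cartan entries varying around the mutation cycle rather than along a fixed classical rank-$2$ Weyl group—is the main obstacle.

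Given (a), we have $|c^c_{ij}| \le 3$ at every object, so the set $\Cc_r$ of admissible Cartan matrices at any object is finite. For (b), I would bound the number of positive roots of the arrangement by induction on $r$, using that every rank-$(r-1)$ parabolic subarrangement is itself crystallographic (and hence bounded by the inductive hypothesis, with the base case $r=3$ handled directly once (a) pins down finitely many rank-$3$ Cartan matrices per object). A positive root $\beta = \sum_i n_i \al_i^c$ will have coefficients $n_i$ bounded by an explicit function of $r$ and the Cartan entries, via a height-and-length argument in the Weyl groupoid, yielding a bound $N(r)$ on the number of hyperplanes; Zaslavsky's theorem then bounds the number of chambers polynomially in $N(r)$. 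Consequently, a rank-$r$ crystallographic arrangement is encoded by a connected labelled graph with at most $N(r)$ vertices, vertex-labels in the finite set $\Cc_r$, and edges labelled by $\{1,\ldots,r\}$; only finitely many such graphs exist, which proves the theorem.
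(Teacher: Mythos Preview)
Your key lemma (a) is false, and this is where the proposal breaks down. In a Weyl groupoid the Cartan matrix varies from chamber to chamber, so the classical dichotomy ``$c_{ij}c_{ji}\le 3$ or the $\langle s_i,s_j\rangle$-orbit is infinite'' simply does not hold: a product $\ge 4$ at one object says nothing about the products at the neighbouring objects along the $(i,j)$-mutation cycle, and the cycle can (and does) close up after finitely many steps. Concretely, the paper shows that Cartan entries as large as $-6$ genuinely occur in irreducible rank-$3$ crystallographic arrangements (see the remark after Theorem~\ref{bound7}); since $c^K_{ij}=0$ iff $c^K_{ji}=0$, this forces $c^K_{ij}c^K_{ji}\ge 6$ at such a chamber. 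Correspondingly, rank-two localizations in rank three are \emph{not} restricted to the classical types $A_1\times A_1$, $A_2$, $B_2$, $G_2$: Figure~\ref{fig:local} already depicts a rank-two localization with eight positive roots, and Theorem~\ref{k21} is stated precisely under the hypothesis $|R^K_+\cap\langle\alpha_1,\alpha_2\rangle|\ge 5$. The ``main obstacle'' you flag is therefore not a technicality but a genuine obstruction: the conclusion you are trying to reach is wrong.

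What the paper actually does is weaker at each step but robust against the varying Cartan data. First it bounds every single Cartan entry by $7$ in absolute value (Theorem~\ref{bound7}), via a delicate analysis of the roots on the $(*,*,1)$-plane over a rank-two localization (Theorem~\ref{k21}, Proposition~\ref{plane001}, Lemma~\ref{lemcon}). From this it deduces by a pigeonhole argument that every rank-two localization in rank three has at most $128$ positive roots (Theorem~\ref{b128}), hence belongs to a finite list, hence $\Vol_2(\alpha,\beta)$ is globally bounded (Corollary~\ref{boundvolm}). A second pigeonhole on $R^K_+$ modulo $(m+1)$ then bounds $|R^K_+|$ by $(m+1)^r$, and Theorem~\ref{sumofroots} finishes. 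If you want to repair your outline, the correct replacement for (a) is a bound on individual entries $|c^K_{ij}|$ rather than on the product, and proving it requires exactly the kind of $(*,*,1)$-plane geometry the paper develops.
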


In other words: we prove that the algorithms presented in \cite{p-CH09c} and \cite{p-CH10} terminate after finitely many steps without running them.
The proof of this finiteness in each rank greater than two relied on these computations in the original proof of Theorem \ref{cryarrclas}.

All theorems required for the original classification, as for instance in \cite{p-CH09c}, were formulated and proved in the terminology of Weyl groupoids. This is reasonable since the Weyl groupoid is the structure which appears naturally in the theory of Nichols algebras. However, the axioms of Weyl groupoids and the involved technical details tend to discourage the reader.
This is why we give new proofs for some of the ``old'' theorems in the terminology of arrangements and avoid groupoids almost entirely.
Moreover, this allows us to develop further notions and results on simplicial arrangements in general as for example Lemma \ref{lem1} or Definition \ref{refgroupoid} (cf.\ \cite{CMW} and \cite{CM17}).

This article includes all proofs required for our main result (and hence recovers some of the known results), except the proofs of Theorems \ref{thmranktwo} and \ref{sumofroots} (which are \cite[Prop.\ 3.7]{p-CH09d} and \cite[Thm.\ 2.10]{p-CH09c}).
Section \ref{sec:refcart} is devoted to arbitrary simplicial arrangements, whereas in Section \ref{sec:cryarr}, we recall the definitions of crystallographic arrangements and roots. Section \ref{sec:local} is about the structure of localizations and Section \ref{sec:bounds} contains the proof of Theorem \ref{mainth}.

\medskip
\noindent{\bf Acknowledgement:}
{I would like to thank C.~Bessenrodt and T.~Holm for very helpful discussions.}

\section{Reflections and Cartan matrices}\label{sec:refcart}

We first recall the notions of simplicial and crystallographic arrangements (compare \cite[1.2, 5.1]{OT} and \cite{p-C10}).

\begin{defin}\label{A_R_2}
Let $r\in\NN$, $V:=\RR^r$, and $(\Ac,V)$ be an \df{arrangement of hyperplanes} in $V$, that is, a finite set of linear hyperplanes $\Ac$ in $V$.
Let $\Kc(\Ac)$ be the set of connected components (\df{chambers}) of $V\backslash \bigcup_{H\in\Ac} H$.
If every chamber $K$ is an \df{open simplicial cone}, i.e.\ there exist
$\alpha^\vee_1,\ldots,\alpha^\vee_r \in V$ such that
\begin{equation*}
K = \Big\{ \sum_{i=1}^r a_i\alpha^\vee_i \mid a_i> 0 \quad\mbox{for all}\quad
i=1,\ldots,r \Big\} =: \langle\alpha^\vee_1,\ldots,\alpha^\vee_r\rangle_{>0},
\end{equation*}
then $\Ac$ is called a \df{simplicial arrangement}.
\end{defin}

\begin{examp}
Let $W$ be a real reflection group acting on $V$, $\RS\subseteq V^*$ the set of roots of $W$.
Then $\Ac = \{\ker \alpha \mid \alpha\in \RS\}$ is a simplicial arrangement.
The reflection arrangement is the most symmetric type of simplicial arrangement, one cannot ``distinguish'' the chambers, they all look the same.
\end{examp}

\begin{defin}[cf.\ {\cite[2.13, 2.15, 2.5]{OT}}]
The \df{product} $(\Ac_1 \times \Ac_2,V_1 \oplus V_2)$ of two arrangements $(\Ac_1,V_1)$, $(\Ac_2,V_2)$
is defined by
\begin{equation*}
\Ac_1 \times \Ac_2 = \{ H_1 \oplus V_2 \mid H_1 \in \Ac_1 \} \cup \{ V_1 \oplus H_2 \mid H_2 \in \Ac_2 \}.
\end{equation*}
If an arrangement $(\Ac,V)$ can be written as a non-trivial product $(\Ac,V) = (\Ac_1 \times \Ac_2,V_1\oplus V_2)$,
then $\Ac$ is called \df{reducible}, otherwise \df{irreducible}.\\
The \df{rank}\footnote{All arrangements $(\Ac,V)$ considered in this article satisfy $\rank \Ac=\dim V$.} of an arrangement $(\Ac,V)$ is $\rank \Ac := \dim (V) - \dim (\bigcap_{H\in \Ac} H)$.
\end{defin}

If $\Ac$ is simplicial then unlike for reflection groups, there are, in general, no linear maps acting as permutations on $\Ac$.
However, the base change maps between adjacent chambers are still reflections, i.e.\ linear automorphisms of finite order fixing a hyperplane.

\begin{defin}
Let $K$ be a field, $r\in\NN$, $V:=K^r$, and $H$ a hyperplane in $V$.
A \df{reflection} on $V$ at $H$ is a $\sigma\in\GL(V)$, $\sigma\ne\id$ of finite order which fixes $H$.
Notice that the eigenvalues of $\sigma$ are $1$ and $\zeta$ for some root of unity $\zeta\in K$.
\end{defin}

\begin{lemma}\label{lem1}
Let $\Ac$ be a simplicial arrangement and $K$ a chamber, i.e.\ there is a basis $B^\vee=\{\alpha^\vee_1,\ldots,\alpha^\vee_r\}$ of $V$ such that $K=\langle B^\vee\rangle_{>0}$. Let $\tilde K$ be the chamber with
\[ \overline{K}\cap\overline{\tilde K} = \langle \alpha^\vee_2,\ldots,\alpha^\vee_r\rangle_{\ge 0}. \]
Then there is a unique $\beta^\vee\in V$ with
\[ \tilde K=\langle \tilde B^\vee \rangle_{>0}, \quad
\tilde B^\vee = \{\beta^\vee,\alpha^\vee_2,\ldots,\alpha^\vee_r\}, \quad
\text{and}\quad |B\cap - \tilde B|=1, \]
where $B:=(B^\vee)^*$ and  $\tilde B:=(\tilde B^\vee)^*$ denote the dual bases.
\end{lemma}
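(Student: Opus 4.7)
My strategy has two moving parts: first, use simpliciality of $\Ac$ to realise $\tilde K$ as $\langle\beta^\vee,\alpha_2^\vee,\ldots,\alpha_r^\vee\rangle_{>0}$ with $\beta^\vee$ pinned down up to positive scaling, and then fix the scaling via the count $|B\cap -\tilde B|=1$.

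For existence of the presentation of $\tilde K$: since $\Ac$ is simplicial, $\tilde K$ is itself an open simplicial cone, so its closure has $r$ extremal rays. The common closed face $\overline K\cap\overline{\tilde K}=\langle\alpha_2^\vee,\ldots,\alpha_r^\vee\rangle_{\ge 0}$ is an $(r-1)$-dimensional simplicial cone whose extremal rays are unique up to positive rescaling. Hence the $r-1$ extremal rays of $\tilde K$ lying in that wall are exactly the rays through $\alpha_2^\vee,\ldots,\alpha_r^\vee$, and choosing these representatives we may write
\[ \tilde K = \langle\beta^\vee,\alpha_2^\vee,\ldots,\alpha_r^\vee\rangle_{>0} \]
for some $\beta^\vee$, determined up to a positive scalar, with $\alpha_1(\beta^\vee)<0$ since $K$ and $\tilde K$ sit on opposite sides of $\ker\alpha_1$.

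For the normalisation I would analyse the dual basis $\tilde B=\{\tilde\alpha_1,\ldots,\tilde\alpha_r\}$ with $\tilde\alpha_1$ dual to $\beta^\vee$. The relations $\tilde\alpha_1(\alpha_j^\vee)=0$ for $j\ge 2$ force $\tilde\alpha_1\in\RR\alpha_1$, because $\alpha_1$ spans the line of linear forms vanishing on $\langle\alpha_2^\vee,\ldots,\alpha_r^\vee\rangle$. Writing $\tilde\alpha_1=c\alpha_1$, the equation $c\,\alpha_1(\beta^\vee)=\tilde\alpha_1(\beta^\vee)=1$ together with $\alpha_1(\beta^\vee)<0$ forces $c<0$. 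Rescaling $\beta^\vee$ by the unique positive factor that makes $\alpha_1(\beta^\vee)=-1$ therefore produces $\tilde\alpha_1=-\alpha_1$; any other positive rescaling would break this equality, giving uniqueness.

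Finally I would verify $|B\cap -\tilde B|=1$. The identity $\tilde\alpha_1=-\alpha_1$ places $\alpha_1\in B\cap -\tilde B$, and the linear independence of $B$ ensures $\tilde\alpha_1$ matches no other $-\alpha_i$. For $j\ge 2$, note that $\tilde\alpha_j$ does not depend on the scaling of $\beta^\vee$ (its defining equations only involve $\tilde\alpha_j(\beta^\vee)=0$ and $\tilde\alpha_j(\alpha_k^\vee)=\delta_{jk}$ for $k\ge 2$); evaluating, $\tilde\alpha_j(\alpha_j^\vee)=1$, whereas $-\alpha_i(\alpha_j^\vee)=-\delta_{ij}\in\{0,-1\}$, ruling out $\tilde\alpha_j\in -B$. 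The only conceptual step is the structural observation $\tilde\alpha_1\in\RR\alpha_1$, which converts the normalisation of $\beta^\vee$ into a single well-posed linear equation; everything else is bookkeeping with dual bases.
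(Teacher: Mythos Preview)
Your proof is correct and follows essentially the same approach as the paper. The paper expands $\beta^\vee=\sum_i\mu_i\alpha_i^\vee$, computes the dual basis explicitly as $\beta_1=\frac{1}{\mu_1}\alpha_1$ and $\beta_j=-\frac{\mu_j}{\mu_1}\alpha_1+\alpha_j$, and then observes that $|B\cap -\tilde B|=1$ forces $\mu_1=-1$; your normalisation $\alpha_1(\beta^\vee)=-1$ is exactly the condition $\mu_1=-1$, and your verification that $\tilde\alpha_j\notin -B$ for $j\ge 2$ spells out a step the paper leaves implicit.
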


\begin{proof}
Choose $\beta^\vee\in V$ such that $\tilde K=\langle \beta^\vee,\alpha^\vee_2,\ldots,\alpha^\vee_r\rangle_{> 0}$.
Let $\mu_1,\ldots,\mu_r\in\RR$ be such that $\beta^\vee=\sum_{i=1}^r \mu_i \alpha^\vee_i$ (notice $\mu_1\ne 0$).
Let $\tilde B=\{\beta_1,\ldots,\beta_r\}$ be the dual basis of $\{\beta^\vee,\alpha^\vee_2,\ldots,\alpha^\vee_r\}$, and
$B=\{\alpha_1,\ldots,\alpha_r\}$ be dual to $B^\vee$.
Then $\beta_1=\frac{1}{\mu_1}\alpha_1$ and $\beta_j=-\frac{\mu_j}{\mu_1}\alpha_1+\alpha_j$ for $j>1$.
To obtain
$|B\cap - \tilde B|=1$
we need
$-\alpha_1=\beta_1\in \tilde B$
and hence $\mu_1=-1$, $\beta_1=-\alpha_1$ and $\beta_j=\mu_j\alpha_1+\alpha_j$ for $j>1$. Thus a $\beta^\vee$ as desired exists and is unique.
\end{proof}

\begin{corol}\label{corlem1}
Using the notation of the proof of Lemma \ref{lem1},
the map
\[ \sigma : V^* \rightarrow V^*, \quad \alpha_i \mapsto \beta_i \]
is a reflection. With respect to $B=(B^\vee)^*$, it becomes the matrix
\[
\begin{pmatrix}
        -1 & \mu_2 & \dots & \mu_r \\
        0 &1&& 0\\
        \vdots & &\ddots&\\
        0 & 0 && 1
\end{pmatrix}.
\]
\end{corol}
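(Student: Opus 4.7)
The plan is to exploit the explicit formulas already derived in the proof of Lemma~\ref{lem1}, namely $\beta_1 = -\alpha_1$ and $\beta_j = \mu_j \alpha_1 + \alpha_j$ for $j = 2, \ldots, r$. Since $\sigma$ is prescribed on the basis $B$ by $\sigma(\alpha_i) = \beta_i$, it extends uniquely to a linear endomorphism of $V^*$; expressing each $\beta_i$ in the basis $B$ immediately places the matrix of $\sigma$ into the upper-triangular form claimed: the first column is $(-1, 0, \ldots, 0)^T$, and, for $j > 1$, the $j$-th column has $\mu_j$ in row one, a $1$ on the diagonal, and zeros elsewhere.

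It then remains to verify that $\sigma$ is a reflection. The displayed matrix has determinant $-1$, so $\sigma \in \GL(V^*)$ and $\sigma \neq \id$. A direct calculation gives $\sigma^2(\alpha_1) = \sigma(-\alpha_1) = \alpha_1$ and, for $j > 1$,
\[
\sigma^2(\alpha_j) = \sigma(\mu_j \alpha_1 + \alpha_j) = -\mu_j \alpha_1 + (\mu_j \alpha_1 + \alpha_j) = \alpha_j,
\]
so $\sigma$ has order $2$. Finally, writing a vector of $V^*$ as $\sum c_i \alpha_i$, the eigenspace for the eigenvalue $1$ is cut out by the single equation $2c_1 = \sum_{j > 1} \mu_j c_j$, hence is a hyperplane. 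Thus $\sigma$ is a nontrivial finite-order automorphism fixing a hyperplane pointwise, i.e.\ a reflection.

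There is essentially no obstacle here: once Lemma~\ref{lem1} is in hand, the corollary reduces to writing down the matrix and checking $\sigma^2 = \id$. The substantive content is the reinterpretation of Lemma~\ref{lem1} in the language of reflections, making explicit that the base-change between two adjacent chambers of a simplicial arrangement is always implemented by an order-two linear reflection whose reflecting hyperplane is the common wall of the two chambers.
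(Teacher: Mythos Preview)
Your proof is correct and is exactly the verification the paper leaves implicit: the corollary is stated without proof in the paper, as it follows directly from the formulas $\beta_1=-\alpha_1$, $\beta_j=\mu_j\alpha_1+\alpha_j$ obtained in the proof of Lemma~\ref{lem1}. One small remark on your closing commentary: $\sigma$ acts on $V^*$, so its fixed hyperplane lives in $V^*$, not in $V$; it is the dual map on $V$ (with the transposed matrix) that fixes the common wall $\ker\alpha_1=\langle\alpha_2^\vee,\ldots,\alpha_r^\vee\rangle$.
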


\begin{examp}\label{ex12}
Let $R=\{(1,0),(0,1),(1,2)\}\in (\RR^2)^*$, $\Ac=\{\alpha^\perp\mid \alpha\in R\}$.
Then $K=\langle B^\vee\rangle_{>0}$ is a chamber if $B^\vee=\{\alpha^\vee_1=(1,0),\alpha^\vee_2=(0,1)\}$,
$K'=\langle \tilde B^\vee\rangle_{>0}$ with $\tilde B^\vee=\{\tilde\beta^\vee=(-2,1),\alpha^\vee_2=(0,1)\}$ is an adjacent chamber.
To obtain $\mu_1=-1$, we need to choose $\beta^\vee=(-1,\frac{1}{2})$, hence $\mu_2=\frac{1}{2}$.
The unique reflection $\sigma$ is
\[ \begin{pmatrix} -1 & \frac{1}{2} \\ 0 & 1 \end{pmatrix} \]
with respect to $B=(B^\vee)^*$.
\end{examp}

\begin{defin}\label{def:cartan}
Let $\Ac$ be a simplicial arrangement, $K=\langle B^\vee \rangle_{>0}$, $B^\vee=\{\alpha^\vee_1,\ldots,\alpha^\vee_r\}$ a chamber,
and $B=\{\alpha_1,\ldots,\alpha_r\}$ be dual to $B^\vee$.
Then by Corollary \ref{corlem1}, there are reflections $\sigma_1,\ldots,\sigma_r$, represented by
\[
\begin{pmatrix}
        1 &  && & 0 \\
         & \ddots&\\
        \mu_{i,1} & \cdots & -1 & \cdots & \mu_{i,r} \\
         & & &\ddots&\\
        0 & & && 1
\end{pmatrix},
\]
for certain $\mu_{i,j}\in \RR$, $i\ne j$
with respect to $B$ and uniquely determined by $K$, $B$ and its adjacent chambers.

The matrix $C^{K,B} = (c_{i,j})_{1\le i,j \le r}$ with
\[ c_{i,j} := \begin{cases} -\mu_{i,j} & \text{if } i\ne j \\
2 & \text{if } i=j \end{cases} \]
is called the \df{Cartan matrix} of $(K,B)$ in $\Ac$.
Note that
\[ \sigma_i(\alpha_j) = \alpha_j - c_{i,j} \alpha_i \]
for all $1\le i,j\le r$.
We sometimes write $\sigma_i^{K,B}$ to emphasize that $\sigma_i$ depends on $K$ and $B$.
\end{defin}

\begin{examp}
\begin{enumerate}
\item Let $\Ac$ be as in Example \ref{ex12}. Then the Cartan matrix of $(K,B)$ is
\[ C^{K,B} = \begin{pmatrix} 2 & -\frac{1}{2} \\ -2 & 2 \end{pmatrix}. \]
\item If $W$ is a Weyl group with root system $\RS$, then all Cartan matrices of $(K,B)$ when $B$ is a set of simple roots for the chamber $K$ are equal and coincide with the classical Cartan matrix of $W$.
\end{enumerate}
\end{examp}

\begin{defin}\label{refgroupoid}
Let $\Ac$ be a simplicial arrangement in $V=\RR^r$.
We construct a category $\Cc(\Ac)$ with
\begin{itemize}
\item objects: $\Obj(\Cc(\Ac)) = \{ B=(\alpha_1,\ldots,\alpha_r)\in (V^*)^r \mid \langle B^* \rangle_{>0}\in\Kc(\Ac) \}$ (where the bases $B$ are ordered).
\item morphisms: for each $B=(\alpha_1,\ldots,\alpha_r)\in \Obj(\Cc(\Ac))$ and $i=1,\ldots,r$ there is a morphism
$\sigma_i^{K,B}\in \Mor(B,(\sigma_i^{K,B}(\alpha_1),\ldots,\sigma_i^{K,B}(\alpha_r)))$. All other morphisms are compositions of the generators $\sigma_i^{K,B}$.
\end{itemize}
A \df{reflection groupoid} $\Wc(\Ac)$ of $\Ac$ is a connected component of $\Cc(\Ac)$.
A \df{Weyl groupoid}\footnote{This is not the general definition of a Weyl groupoid. For a complete set of axioms, see \cite{p-CH09a}.} is a reflection groupoid for which all Cartan matrices are integral.
\end{defin}

Using the so-called gate property, one can prove the existence of a type function for the chamber complex of a simplicial arrangement. In other words:

\begin{propo}[cf.\ {\cite[Prop.\ 3.26, Lemma 3.29]{CMW}}]\label{typefunction}
Let $\Ac$ be a simplicial arrangement, $\Wc(\Ac)$ a reflection groupoid, and $B_1=(\alpha_1,\ldots,\alpha_r)$, $B_2=(\beta_1,\ldots,\beta_r)$ two objects with $\langle B_1^*\rangle_{>0} = \langle B_2^*\rangle_{>0}$.
Then there exist $\lambda_1,\ldots,\lambda_r$ such that $\alpha_i=\lambda_i \beta_i$ for all $i=1,\ldots,r$.\\
In particular, for a fixed reflection groupoid we obtain a unique labelling of the walls of each chamber with the labels $1,\ldots,r$.
\end{propo}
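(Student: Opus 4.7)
The plan is to attach a canonical label in $\{1,\ldots,r\}$ to each wall of a chamber, as seen from each object of $\Wc(\Ac)$ sitting over that chamber, and then to show that this labelling depends only on the chamber, not on the chosen object. Given $B=(\alpha_1,\ldots,\alpha_r)\in\Obj(\Wc(\Ac))$ with $K=\langle B^*\rangle_{>0}$, the linear form $\alpha_i$ vanishes exactly on the hyperplane spanned by $\{\alpha_j^\vee : j\neq i\}$, which is one of the $r$ walls of $K$; I would call this hyperplane the \emph{wall of $B$ labelled $i$}. Once two objects over the same chamber are shown to produce the same labelling, the conclusion $\alpha_i=\lambda_i\beta_i$ is immediate, since two non-zero linear forms with the same kernel are scalar multiples.

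First I would check that the generators of $\Wc(\Ac)$ are compatible with this labelling on the crossed wall. From the matrix description in Definition \ref{def:cartan}, $\sigma_i^{K,B}$ sends $\alpha_i$ to $-\alpha_i$, so in the target object $B'$ over the adjacent chamber $\tilde K$ the wall labelled $i$ is $(-\alpha_i)^\perp=\alpha_i^\perp$, which is exactly the wall separating $K$ from $\tilde K$. Thus each generator preserves the label of the wall it crosses. Since every morphism of $\Wc(\Ac)$ is a composition of such generators, it corresponds to a gallery in $\Kc(\Ac)$ together with a choice of crossed-wall label at each step, and it induces a bijection between the initial and terminal labellings of the walls of the endpoint chamber. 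For $B_1,B_2$ over the same chamber $K$, the associated gallery is closed, and the problem reduces to showing that any closed gallery acts as the identity on labels.

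This last step is the geometric heart of the argument, and I would import it from the theory of simplicial chamber complexes: by the gate property of \cite[Prop.\ 3.26, Lemma 3.29]{CMW}, the chamber complex of $\Ac$ admits a global type function, which means precisely that closed galleries act trivially on wall labels. In particular $\alpha_i^\perp=\beta_i^\perp$ for all $i$, giving the required scalars $\lambda_i$. The second sentence of the proposition---existence of a canonical labelling of the walls of each chamber for the fixed reflection groupoid---is then just a restatement of this type-function property.

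The main obstacle is the gate property itself, which is genuinely non-trivial and does not follow from the matrix computations in Lemma \ref{lem1} and Corollary \ref{corlem1}; it relies on the simpliciality of the chambers (thinness of the chamber complex) and standard foldability arguments for such complexes. I would simply cite it rather than reprove it, as is done in the paper, so that the remaining work reduces to the one-line verification that the $i$-th generator fixes the label of the wall it crosses.
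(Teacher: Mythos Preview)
The paper does not actually prove this proposition; it simply invokes the gate property and cites \cite[Prop.\ 3.26, Lemma 3.29]{CMW}. Your outline is therefore more detailed than what the paper provides, and the overall strategy---produce a type function from the gate property and argue that the labelling carried by the objects of $\Wc(\Ac)$ agrees with it---is exactly right.

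There is, however, a genuine gap in the bridge you build between the groupoid and the type function. You verify only that the generator $\sigma_i^{K,B}$ preserves the label of the \emph{crossed} wall: $\alpha_i\mapsto-\alpha_i$, so the common panel is labelled $i$ on both sides. For $r\ge 3$ this is not enough to conclude that the labelling carried by $B'=\sigma_i^{K,B}(B)$ agrees with the global type function on the remaining walls of $\tilde K$; a priori the other labels could be permuted. What you actually need is the dual statement coming from Lemma~\ref{lem1}: in the dual basis $\tilde B^\vee=(\beta^\vee,\alpha_2^\vee,\ldots,\alpha_r^\vee)$ the vectors $\alpha_j^\vee$ for $j\ne i$ are \emph{unchanged}. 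Hence the rays $\langle\alpha_j^\vee\rangle$ (the common vertices of $K$ and $\tilde K$) keep their label $j$ when passing from $B$ to $B'$. This is precisely the compatibility condition that forces the groupoid labelling to coincide with the vertex type function, and by induction along the gallery every object in the connected component then matches the same global type function. Your holonomy statement (``closed galleries act trivially on wall labels'') follows, but only after this stronger check; the common-wall check alone does not pin down the transport of labels.

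With that one correction---replace the common-wall observation by the common-vertex observation from Lemma~\ref{lem1}---your argument is complete and matches the intended use of the cited results.
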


\begin{defin}\label{def:rho}
Let $\Ac$ be a simplicial arrangement, $\Wc(\Ac)$ a reflection groupoid, and $K=\langle B^* \rangle_{>0}$ a chamber for $B=(\alpha_1,\ldots,\alpha_r)\in \Obj(\Cc(\Ac))$.
For $i\in \{1,\ldots,r\}$, let $\rho_i(K)$ be the chamber adjacent to $K$ with common wall $\ker \alpha_i$.
We thus obtain well defined maps
\[ \rho_i : \Kc(\Ac) \mapsto \Kc(\Ac) \]
which satisfy $\rho_i^2 = \id$ by Proposition \ref{typefunction}.
\end{defin}

\section{Crystallographic arrangements}\label{sec:cryarr}

\begin{defin}[{\cite[Definition 2.3]{p-C10}}]\label{cryst:arr}
Let $\Ac$ be a simplicial arrangement in $V$ and $\RS\subseteq V^*$ a finite set
such that $\Ac = \{ \ker \alpha \mid \alpha \in \RS\}$ and $\RR\alpha\cap \RS=\{\pm \alpha\}$
for all $\alpha \in \RS$.
We call $(\Ac,V,\RS)$ a \df{crystallographic arrangement} if
for all chambers $K\in\Kc(\Ac)$:
\begin{equation}\label{equ:crys}
\RS \subseteq \sum_{\alpha \in B^K} \ZZ \alpha,
\end{equation}
where
\[ B^K = \{ \alpha\in \RS \mid \forall x\in K\::\:\alpha(x)\ge 0,\:\: \langle \ker \alpha \cap \overline{K}\rangle = \ker \alpha \} \]
corresponds to the set of walls of $K$.
\\
Two crystallographic arrangements $(\Ac,V,\RS)$, $(\Ac',V,\RS')$ in $V$ are called \df{equivalent}
if there exists $\psi\in\Aut(V^*)$ with $\psi(\RS)=\RS'$. We then write $(\Ac,V,\RS)\cong(\Ac',V,\RS')$.
\\
If $\Ac$ is an arrangement in $V$ for which a set $\RS\subseteq V^*$ exists such that $(\Ac,V,\RS)$ is crystallographic, then we say that $\Ac$ is \df{crystallographic}.
\end{defin}

\begin{examp}
\begin{enumerate}
\item Let $\RS$ be the set of roots of the root system
of a crystallographic reflection group (i.e.\ a Weyl group). Then
$(\{\ker \alpha \mid \alpha\in \RS\},V,\RS)$ is a crystallographic arrangement.
\item If $R_+ := \{(1,0),(3,1),(2,1),(5,3),(3,2),(1,1),(0,1)\}$,
then $(\{\alpha^\perp \mid \alpha\in R_+\},$ $\RR^2,R_+\cup -R_+)$ is a crystallographic arrangement.
\end{enumerate}
\end{examp}

If $\Ac$ is crystallographic then there is a reflection groupoid $\Wc(\Ac)$ as in Definition \ref{refgroupoid} which is a Weyl groupoid and the notion of root system of a Weyl group may be adapted:

\begin{defin}\label{rootscartan}
Let $(\Ac,V,\RS)$ be a crystallographic arrangement and $K$ a chamber.
Fixing an ordering for $B^K$, we obtain a unique reflection groupoid $\Wc(\Ac)$ and thus unique orderings for all $B^{K'}$, $K'\in \Kc(\Ac)$ by Proposition \ref{typefunction}.
Notice further that the crystallographic property (\ref{equ:crys}) implies that $\Wc(\Ac)$ is a Weyl groupoid and that there is a unique object in $\Wc(\Ac)$ for each chamber of $\Ac$.
Hence we obtain a unique coordinate map
$$ \coord^K : V\rightarrow \RR^r \quad \text{with respect to}\quad B^K.$$
The elements of the standard basis $\{\alpha_1,\ldots,\alpha_r\}=\coord^K(B^K)$ are called \df{simple roots}.
The set
\[ R^K := \{ \coord^K(\alpha) \mid \alpha\in \RS \} \subseteq \NN_0^r \cup -\NN_0^r \]
is called the set of \df{roots} of $\Ac$ at $K$. The roots in $R^K_+:=R^K\cap \NN_0^r$ are called \df{positive}.
Let $1\le i,j \le r$. Then it is easy to see that
\[
c^K_{i,j} = \begin{cases}
-\max\{ k\in\NN_{\ge 0} \mid k\alpha_i+\alpha_j \in R^K \} & i\ne j \\
2 & i=j
\end{cases},
\]
where $C^K:=(c^K_{i,j})_{i,j}$ is the Cartan matrix of $(K,B^K)$ as defined in Definition \ref{def:cartan}.
Recall that for every $i=1,\ldots, r$, we have a reflection $\sigma^K_i : \ZZ^r\rightarrow\ZZ^r$ defined by
$\sigma^K_i(\alpha_j) = \alpha_j - c^K_{i,j} \alpha_i$
for all $1\le j\le r$.\\
Remark that if $\tilde K$ is the chamber adjacent to $K$ with
\[ \langle \overline{K}\cap\overline{\tilde K}\rangle=\ker \alpha \quad \text{for}\quad \alpha\in R \quad \text{with}
\quad \coord^K(\alpha)=\coord^{\tilde K}(\alpha)=\alpha_i, \]
then Lemma \ref{lem1} or \cite[Lemma 2.9]{p-C10} imply $\sigma^K_i = \coord^{\tilde K} \circ (\coord^K)^{-1}$ and thus $\sigma^K_i(R^K)=R^{\tilde K}$.
Finally, remember that we have maps $\rho_i : \Kc(\Ac) \mapsto \Kc(\Ac)$, $K \mapsto \tilde K$ (Definition \ref{def:rho}).
\end{defin}

To avoid confusion, we use different fonts for the ``global'' set $\RS$ and the ``local'' representations $R^K$.
These local representations ``are'' the objects of the Weyl groupoid. Notice that in the crystallographic case we have
\[ \Mor(B^K,B^{\tilde K}) = \{ w^{K,\tilde K} := \coord^{\tilde K} \circ (\coord^K)^{-1} \} \]
for chambers $K$ and $\tilde K$.

A crystallographic arrangement ``lives'' in a lattice; the crystallographic property (\ref{equ:crys}) implies a certain ``saturation'' quantified by
the following ``volume'' function which will play an important role.

\begin{defin}\label{volume}
Let $m\in \NN$. By the Smith normal form there is a unique left
$\GL (\ZZ^r)$-invariant right $\GL (\ZZ^m)$-invariant function
$\Vol_m : (\ZZ^r)^m\to \ZZ$ such that
\begin{align}
  \Vol_m(a_1\al _1,\dots,a_m\al _m)=|a_1\cdots a_m| \quad
  \text{for all $a_1,\dots,a_m\in \ZZ $,}
\end{align}
where $|\cdot |$ denotes absolute value,
i.e.\ $\Vol_m(\beta_1,\ldots,\beta_m)$ is the product of the elementary divisors of the matrix with columns $\beta _1,\dots,\beta _m$.
\end{defin}

In particular, if $m=1$ and $\beta \in \ZZ^r\setminus \{0\}$, then $\Vol_1(\beta )$ is the greatest common divisor of the coordinates of $\beta$. Further, if $m=r$ and $\beta _1,\dots,\beta _r\in \ZZ^r$,
then $\Vol_r(\beta _1,\dots,\beta _r)$ is the absolute
value of the determinant of the matrix with columns $\beta _1,\dots,\beta _r$.

Definition \ref{volume} yields a ``\df{volume}'' for roots:

\begin{defin}
Let $(\Ac,V,\RS)$ be an irreducible crystallographic arrangement of rank $r$. By the crystallographic property (\ref{equ:crys}), for chambers $K$, $K'$, the bases $B^K$ and $B^{K'}$ differ by a map in $\GL (\ZZ^r)$. Thus for $\beta_1,\ldots,\beta_m\in \RS$,
\[ \Vol_m(\coord^K(\beta_1),\ldots,\coord^K(\beta_m)) = \Vol_m(\coord^{K'}(\beta_1),\ldots,\coord^{K'}(\beta_m)). \]
Hence we have a well-defined map
\[ \Vol_m : \RS^m \rightarrow \ZZ, \quad (\beta_1,\ldots,\beta_m) \mapsto \Vol_m(\coord^K(\beta_1),\ldots,\coord^K(\beta_m)) \]
which does not depend on the choice of $K$.
\end{defin}

\section{Localizations}\label{sec:local}

\subsection{Localizations in root systems}

\begin{defin}
Let $\Ac$ be an arrangement and $X\le V$.
Let $\tilde\Ac_X := \{ H \in \Ac \mid X \subseteq H \}$, $U:=\bigcap_{H\in \tilde\Ac_X} H$, and $\pi : V\rightarrow V/U$ be the canonical projection.
The \df{localization} of $\Ac$ at $X$ is the arrangement
$(\Ac_X := \{ \pi(H) \mid H \in \tilde\Ac_X \}, V/U)$.
\\
The \df{intersection lattice} $L(\Ac)$ of $\Ac$ is the set of all finite intersections $H_1 \cap \ldots \cap H_k$ with $H_1,\ldots,H_k\in \Ac$.
\end{defin}

\begin{remar}
If $\Ac$ is simplicial (resp.\ crystallographic), then all localizations
are simplicial (resp.\ crystallographic) (cf.\ \cite{p-CRT11} or \cite{CMW}).
\end{remar}

\begin{remar}\label{locroots}
It is easy to understand localizations using roots:
Let $(\Ac,V,\RS)$ be a crystallographic arrangement, $X\le V$, without loss of generality $X\in L(\Ac)$,
and let $\pi : V\rightarrow V/X$ be the canonical projection.
Consider $\Ss:=\{\alpha\in \RS\mid X\subseteq \ker(\alpha)\}$.
Any $\alpha\in \Ss$ defines a linear form
$$ \iota(\alpha) : V/X \rightarrow \RR,\quad v+X\mapsto \alpha(v)$$
in $(V/X)^*$; remark that $\iota : \Ss\rightarrow (V/X)^*$ is injective.
Then $(\Ac_X,V/X,\iota(\Ss))$ is the localization and it is a crystallographic arrangement.
Moreover, $\Ss = \RS\cap \langle \Ss \rangle_\RR$.
Thus localizations correspond to subsets of $\RS$ of all roots contained in a fixed subspace\footnote{Notice that localizations are related to parabolic subgroupoids of the Weyl groupoid associated to $\Ac$; we omit these notions because we will not need them.}.
The set of chambers of $\Ac_X$ is
\[ \{ \pi(K) \mid K \text{ chamber of } \Ac, \:\: \overline{K}\cap X \ne 0\}. \]
\end{remar}

\begin{defin}
Let $(\Ac,V,\RS)$ be a crystallographic arrangement and $K$ a chamber.
For a subspace $X\le \RR^r$, we call $S_{K,X}:=X \cap R^K$ a \df{localization} of the crystallographic arrangement at $K$ and $X$. Notice that
\[ S_{K,X} = {S_{K,X}}_+ \dot\cup -{S_{K,X}}_+ \quad \text{for} \quad {S_{K,X}}_+:=X \cap R^K_+. \]
\end{defin}

\begin{lemma}\label{ssloc}
Let $(\Ac,V,\RS)$ be a crystallographic arrangement, $K$ a chamber, and $X\le \RR^r$.
Then there is a subset $\Delta\subseteq X\cap R^K_+$ which is a set of simple roots for the localization ${S_{K,X}}=X \cap R^K$, i.e.\
\[ {S_{K,X}}_+ \subseteq \sum_{\alpha\in \Delta} \NN_0 \alpha. \]
\end{lemma}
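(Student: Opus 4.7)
The plan is to exhibit $\Delta$ as a set of simple roots for a chamber of the crystallographic subarrangement whose root set is $S_{K,X}$, chosen so that this chamber's positive roots coincide with $S_{K,X,+}$.

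By Remark \ref{locroots}, applied after transporting along $\coord^K$, the subset $S_{K,X} = X \cap R^K$ is the full root set of a crystallographic arrangement $\Bc$ — namely the localization of $\Ac$ at a suitable subspace of $V$. Consequently $\Bc$ has chambers, and Definition \ref{cryst:arr} applied to $\Bc$ tells us that for any chamber $C$ of $\Bc$ the walls $B^C \subseteq S_{K,X}$ form an $\NN_0$-basis for the set of roots of $\Bc$ that are non-negative on $C$.

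It then remains to produce a chamber $C^\ast$ of $\Bc$ whose set of positive roots is exactly $S_{K,X,+}$. For this, fix any interior point $v \in K$. Since $K = \langle B^\vee \rangle_{>0}$ we have $\alpha(v) > 0$ for each $\alpha \in B^K$, so the same positivity holds for every $\beta \in R^K_+$, because such a $\beta$ is a nonzero $\NN_0$-combination of $B^K$. In particular $\beta(v) > 0$ for all $\beta \in S_{K,X,+}$ and $\beta(v) < 0$ for all $\beta \in -S_{K,X,+}$. The image of $v$ in the ambient space of $\Bc$ therefore avoids every hyperplane of $\Bc$ and lies in a unique chamber $C^\ast$, whose positive roots are by construction exactly $S_{K,X,+}$.

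Taking $\Delta := B^{C^\ast}$ delivers the inclusion $S_{K,X,+} \subseteq \sum_{\alpha \in \Delta} \NN_0 \alpha$ via the previous paragraph. The main obstacle is not conceptual but purely notational — tracking the identifications between $V$, $V^*$, and $\RR^r$ induced by $\coord^K$ — since once $\Bc$ is recognized as a crystallographic arrangement in its own right, the conclusion reduces to the standard fact that each chamber of a crystallographic arrangement admits a set of simple roots.
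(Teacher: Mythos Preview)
Your proposal is correct and follows essentially the same route as the paper's proof: both identify $S_{K,X}$ with the root set of a localized crystallographic arrangement and take $\Delta$ to be the simple roots of the chamber therein that contains (the image of) $K$. The paper phrases this as ``let $K'$ be the chamber in the localization such that $\pi(K)\subseteq K'$'' and sets $\Delta=\iota^{-1}(\Delta')$ for $\Delta'$ the simple roots at $K'$, while you make the same choice more explicitly by picking an interior point $v\in K$ and locating the chamber of $\Bc$ containing its image; your remark about the notational bookkeeping is well taken, since the paper's own proof silently switches between the subspace $X\le\RR^r$ of the lemma and the corresponding subspace of $V$ used in Remark~\ref{locroots}.
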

\begin{proof}
Let $\pi : V\rightarrow V/X$ be the canonical projection and $\iota$ the map defined in Remark \ref{locroots}.
Let $K'$ be the chamber in the localization such that $\pi(K)\subseteq K'$.
If $\Delta'$ is the set of simple roots of $\Ac_X$ at $K'$, then $\Delta:=\iota^{-1}(\Delta')\subseteq X\cap R^K_+$ is as desired.
\end{proof}

\subsection{Rank two}\label{secranktwo}

\begin{defin}\label{R_seq}
Define \df{$\cEs$-sequences} as finite sequences of length $\ge 2$
with entries in $\NN _0^2$ given by the following recursion.
\begin{enumerate}
\item\label{Fseq1} $((0,1),(1,0))$ is an $\cEs$-sequence.
\item\label{addtr} If $(v_1,\ldots,v_n)$ is an $\cEs$-sequence, then
$(v_1,\ldots,v_i,v_i+v_{i+1},v_{i+1},\ldots,v_n)$
are $\cEs$-sequences for $i=1,\ldots,n-1$.
\item Every $\cEs$-sequence is obtained recursively by 
\eqref{Fseq1} and \eqref{addtr}.
\end{enumerate}
\end{defin}

\begin{remar}
Notice that by definition, the construction of an $\cEs$-sequence $(v_1,\ldots,v_n)$ produces a triangulation of an $n$-gon by non-intersecting diagonals: step (\ref{Fseq1}) is the $2$-gon, including a sum $v_i+v_{i+1}$ as in (\ref{addtr}) corresponds to adding a triangle.
It is easy to check that an $\cEs$-sequence consists of vectors in a pair of diagonals in the Conway-Coxeter frieze pattern associated to this triangulation (cf.\ \cite{p-C14}).
\end{remar}

\begin{theor}[cf.\ {\cite{p-CH09b}}]\label{thmranktwo}
Let $(\Ac,V)$ be an arrangement of rank two and $\RS\subseteq V^*$ such that $\Ac = \{ \ker \alpha \mid \alpha \in \RS\}$ and $\RR\alpha\cap \RS=\{\pm \alpha\}$ for all $\alpha \in \RS$.
Then $(\Ac,V,\RS)$ is a crystallographic arrangement if and only if there exists a chamber $K$ such that $R^K_+$ is an $\cEs$-sequence.
In this case, $R^K_+$ is an $\cEs$-sequence for all chambers $K$.
\end{theor}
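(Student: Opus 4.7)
The plan is to prove both implications by induction on the cardinality $n$ of $R^K_+$. Fix a chamber $K$ with ordered simple roots $\alpha_1,\alpha_2$, so that in $\coord^K$-coordinates $R^K_+\subseteq\NN_0^2$ with $\alpha_1=(1,0)$ and $\alpha_2=(0,1)$. Since no two distinct positive roots are proportional, $R^K_+$ carries an intrinsic total order by decreasing slope, and I write $R^K_+=(v_1,\ldots,v_n)$ with $v_1=\alpha_2$ and $v_n=\alpha_1$. The base case $n=2$ is immediate: $((0,1),(1,0))$ is precisely the base sequence of Definition \ref{R_seq}~(\ref{Fseq1}), and the corresponding two-line arrangement is trivially crystallographic.

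For the forward direction, the first step is to show that $|\det(v_i,v_{i+1})|=1$ for every pair of consecutive positive roots. I apply the crystallographic condition (\ref{equ:crys}) to the chamber $K_i$ of $\Ac$ whose walls are $\ker v_i$ and $\ker v_{i+1}$: its simple roots are $\pm v_i$ and $\pm v_{i+1}$ (with signs positive on $K_i$), so (\ref{equ:crys}) forces $\alpha_1,\alpha_2\in\ZZ v_i+\ZZ v_{i+1}$, and combined with the reverse inclusion this yields $\ZZ v_i+\ZZ v_{i+1}=\ZZ^2$. From these determinant relations one deduces that $v_{i+1}\in -v_{i-1}+\ZZ v_i$, and positivity forces a recurrence $v_{i-1}+v_{i+1}=b_iv_i$ with $b_i\in\NN$ for each $2\le i\le n-1$.

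The crucial combinatorial step is to exhibit an index $i$ with $b_i=1$. Writing $a_i$ for the first coordinate of $v_i$, the recurrence $a_{i+1}=b_ia_i-a_{i-1}$ together with $a_i\ge 0$ makes the differences $a_{i+1}-a_i$ non-decreasing as soon as all $b_i\ge 2$; starting from $a_2-a_1\ge 1$ this would force $a_n\ge n-1\ge 2$, contradicting $v_n=(1,0)$. Once such an $i$ is fixed, the equality $v_i=v_{i-1}+v_{i+1}$ lets me delete $\ker v_i$ from $\Ac$ and $\pm v_i$ from $\RS$: the two chambers adjacent to $\ker v_i$ merge into a single wedge whose simple-root pair $(v_{i-1},v_{i+1})$ has $|\det|=b_i=1$, so the reduced arrangement is still crystallographic; the inductive hypothesis gives that $(v_1,\ldots,\hat v_i,\ldots,v_n)$ is an $\cEs$-sequence, and reinserting $v_i$ via clause (\ref{addtr}) of Definition \ref{R_seq} completes the induction. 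The statement that $R^{K'}_+$ is an $\cEs$-sequence for every other chamber $K'$ follows by applying the same argument to $K'$.

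The converse direction runs the same induction in reverse: if $(v_1,\ldots,v_n)$ arises from a shorter $\cEs$-sequence by inserting $v_i=v_{i-1}+v_{i+1}$, then the smaller arrangement is crystallographic by induction, and restoring $\ker v_i$ subdivides the merged chamber into two chambers whose simple-root pairs $(v_{i-1},v_i)$ and $(v_i,v_{i+1})$ each have $|\det|=1$, while all other chambers are untouched, so (\ref{equ:crys}) continues to hold. The main obstacle I anticipate is the ``ear'' step—proving that some $b_i$ equals $1$—because it is the only place where a genuinely global argument is needed rather than a local Cartan-matrix computation; a purely local approach would at best give information near the endpoints of the sequence, not an interior witness.
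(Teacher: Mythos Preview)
The paper does not give a proof of this theorem; the introduction explicitly states that the proofs of Theorems~\ref{thmranktwo} and~\ref{sumofroots} are omitted, citing \cite[Prop.~3.7]{p-CH09d} instead. So there is no in-paper argument to compare against.

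Your argument is correct and is essentially the continued-fraction/Stern--Brocot approach underlying \cite{p-CH09b} and \cite{p-CH09d}. The integers $b_i$ you extract from the recurrence $v_{i-1}+v_{i+1}=b_iv_i$ are exactly the quiddity cycle of Remark~\ref{quid121}: inserting $v_i+v_{i+1}$ into an $\cEs$-sequence transforms the $b$-sequence by $(\ldots,b_i,b_{i+1},\ldots)\mapsto(\ldots,b_i{+}1,1,b_{i+1}{+}1,\ldots)$, matching clause~(\ref{qaddtr}) there, and your ``ear'' step is the standard fact that every such cycle contains a~$1$. Two small points you should make explicit, though both are immediate inductions on Definition~\ref{R_seq}: that consecutive entries of any $\cEs$-sequence satisfy $|\det(v_i,v_{i+1})|=1$ (used in your converse direction to see the new chambers are crystallographic), and that all entries of an $\cEs$-sequence have pairwise distinct slopes (so the condition $\RR\alpha\cap\RS=\{\pm\alpha\}$ survives the insertion of a new hyperplane).
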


\begin{remar}\label{quid121}
A crystallographic arrangement $\Ac$ of rank two and a chamber $K$ define a sequence of negative Cartan entries
\[ (c_1,\ldots,c_n):=(-c_{1,2}^K,-c_{2,1}^{\rho_1(K)},-c_{1,2}^{\rho_2(\rho_1(K))},\ldots) \]
called a \df{quiddity cycle}, where $n=|\Ac|$.
Quiddity cycles are built like $\cEs$-sequences:
\begin{enumerate}
\item\label{qFseq1} $(0,0)$ is a quiddity cycle.
\item\label{qaddtr} If $(c_1,\ldots,c_n)$ is a quiddity cycle, then
$(c_1,\ldots,c_i+1,1,c_{i+1}+1,\ldots,c_n)$
are quiddity cycles for $i=1,\ldots,n-1$.
\item Every quiddity cycle is obtained recursively by 
\eqref{qFseq1} and \eqref{qaddtr}.
\end{enumerate}
Notice that this construction implies that the only quiddity cycles containing $(1,2,1)$ are $(1,2,1,2)$ and $(2,1,2,1)$
since the only quiddity cycle containing $(1,1)$ is $(1,1,1)$.
\end{remar}

From Theorem \ref{thmranktwo} we obtain in particular:

\begin{corol}\label{ranktwo}
Let $(\Ac,V,\RS)$ be a crystallographic arrangement of rank two and $K$ a chamber.
\begin{enumerate}
\item\label{sor2} Any $\alpha\in R^K_+$ is either simple or the sum of two positive roots in $R^K_+$.
\item\label{con2} If $\alpha$, $\beta$ are simple roots and $k\alpha+\beta \in R^K_+$, then $\ell\alpha+\beta\in R^K_+$ for all $\ell=0,\ldots,k$.
\end{enumerate}
\end{corol}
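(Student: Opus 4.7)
The plan is to reduce both assertions to a combinatorial statement about $\cEs$-sequences via Theorem \ref{thmranktwo}, and then induct on the construction given in Definition \ref{R_seq}.

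Part \eqref{sor2} is the easy half. In the base sequence $((0,1),(1,0))$ both entries are the simple roots, so the dichotomy holds. An insertion step introduces exactly one new vector $v_i+v_{i+1}$, which is by construction the sum of two elements of the old sequence; both of those elements remain present after the insertion, so the new vector is a sum of two roots of $R^K_+$. Previously existing entries of course still satisfy the dichotomy, and so the claim propagates through the recursion.

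For part \eqref{con2}, after exchanging the two simple roots if necessary, I may assume $\alpha=(0,1)$ and $\beta=(1,0)$ in the coordinates given by $\coord^K$. The statement then reads: whenever $(1,k)$ occurs in the $\cEs$-sequence, so does $(1,\ell)$ for every $\ell=0,\ldots,k$. The key preliminary observation is that consecutive entries of every $\cEs$-sequence satisfy $|\det(v_i,v_{i+1})|=1$; this holds in the base case and is preserved by insertions because a direct calculation gives $\det(v_i,v_i+v_{i+1})=\det(v_i+v_{i+1},v_{i+1})=\det(v_i,v_{i+1})$.

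With this identity available, I would run the main induction. Consider the insertion of a new vector $w:=v_i+v_{i+1}$; the only way it can violate the claim is if $w=(1,k)$ for some $k\ge 1$. Since $v_i,v_{i+1}\in\NN_0^2$ and their first coordinates sum to $1$, exactly one of them, say $v_i$, has first coordinate zero. The determinant identity then forces $v_i=(0,1)$, so $v_{i+1}=(1,k-1)$ already belongs to the sequence before the insertion. The inductive hypothesis yields $(1,\ell)\in R^K_+$ for $\ell=0,\ldots,k-1$, and the insertion supplies $(1,k)$, closing the induction. The only delicate point of the whole argument is this ``forcing'' step pinning down $v_i$ and $v_{i+1}$ from the shape of their sum, but with the determinant identity at hand it becomes essentially automatic.
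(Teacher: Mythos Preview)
Your proof is correct and follows exactly the route the paper intends: the paper states the corollary as an immediate consequence of Theorem~\ref{thmranktwo} without spelling out any details, and you have supplied precisely the induction on the recursive construction of $\cEs$-sequences that makes this implication explicit. The determinant identity $|\det(v_i,v_{i+1})|=1$ for consecutive entries is the standard (and correct) tool for the forcing step in part~\eqref{con2}, and your handling of both parts is complete.
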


Corollary \ref{ranktwo} (\ref{sor2}) may be extended to arbitrary rank, we omit the proof because it involves the length function of a Weyl groupoid:

\begin{theor}[cf.\ {\cite[Thm.\ 2.10]{p-CH09c}}]\label{sumofroots}
Let $(\Ac,V,\RS)$ be a crystallographic arrangement, $K$ a chamber, and $\alpha\in R^K_+$ a positive root.
Then either $\alpha$ is simple, or it is the sum of two positive roots in $R^K_+$.
\end{theor}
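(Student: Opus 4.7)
My plan is to extend Corollary \ref{ranktwo}(\ref{sor2}) to arbitrary rank by inducting on the depth of $\alpha$ in the Weyl groupoid $\Wc(\Ac)$, with the crucial decomposition step supplied by Corollary \ref{ranktwo}(\ref{con2}) applied inside a rank-two localization. First I will introduce a length function $\ell$ on morphisms of $\Wc(\Ac)$, setting $\ell(w) := |\{\beta \in R^K_+ : w(\beta) \in R^{K'}_-\}|$ for $w : B^K \to B^{K'}$; by standard groupoid arguments (analogues of the usual Coxeter-style reduced-expression theory) $\ell$ admits reduced expressions $w = \sigma_{i_n}\cdots\sigma_{i_1}$ with $n = \ell(w)$, and gives an explicit combinatorial list of the inverted positive roots of $w$ at $K$. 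Since the crystallographic property forces $\RS = -\RS$, the opposite chamber $-K$ always exists and the morphism $K \to -K$ inverts every positive root; so for each $\alpha \in R^K_+$ the depth $d(\alpha) := \min\{\ell(w) : w(\alpha) \in R_-\}$ is a well-defined positive integer. The main induction will be on $d(\alpha)$.

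The base case $d(\alpha) = 1$ will be immediate: the single simple reflection realizing the depth inverts precisely the corresponding simple root, so $\alpha$ is simple. For $d(\alpha) = n \ge 2$, I will take a minimum-length realizer $w = \sigma_{i_n}\cdots\sigma_{i_1}$ and set $\alpha^{(k)} := \sigma_{i_k}\cdots\sigma_{i_1}(\alpha)$. Minimality of $d(\alpha)$ forces $\alpha^{(k)} \in R^{K_k}_+$ for all $k < n$, so the sign change at the last step yields $\alpha^{(n-1)} = \alpha_{i_n}^{K_{n-1}}$; unwinding one step further through the involutive matrix $\sigma_{i_{n-1}}^{K_{n-2}}$ gives
\[
\alpha^{(n-2)} = \alpha_{i_n}^{K_{n-2}} + m\,\alpha_{i_{n-1}}^{K_{n-2}}, \qquad m := -c^{K_{n-2}}_{i_{n-1},i_n} \ge 1.
\]
Here $m \ge 1$ because $d(\alpha^{(n-2)}) = 2$ forces $\alpha^{(n-2)}$ to be non-simple, and the case $i_{n-1} = i_n$ is ruled out as it would make $\alpha^{(n-2)}$ negative. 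At this point I will pass to the rank-two localization of $R^{K_{n-2}}$ at $X := \mathrm{span}(\alpha_{i_{n-1}}^{K_{n-2}}, \alpha_{i_n}^{K_{n-2}})$, whose simple roots are $\alpha_{i_{n-1}}^{K_{n-2}}$ and $\alpha_{i_n}^{K_{n-2}}$ (they are simple in $R^{K_{n-2}}$ and therefore admit no decomposition into two positives inside the localization). Applying Corollary \ref{ranktwo}(\ref{con2}) then yields $(m-1)\alpha_{i_{n-1}}^{K_{n-2}} + \alpha_{i_n}^{K_{n-2}} \in R^{K_{n-2}}_+$, so
\[
\alpha^{(n-2)} = \bigl((m-1)\alpha_{i_{n-1}}^{K_{n-2}} + \alpha_{i_n}^{K_{n-2}}\bigr) + \alpha_{i_{n-1}}^{K_{n-2}}
\]
decomposes as a sum of two positive roots at $K_{n-2}$.

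The final step will be to pull this decomposition back to $R^K_+$ through $(w')^{-1}$ with $w' := \sigma_{i_{n-2}}\cdots\sigma_{i_1}$. The reduced-expression description of inverted roots identifies the pullback of $\alpha_{i_{n-1}}^{K_{n-2}}$ with the $(n-1)$st inverted positive root of the one-step-longer expression $\sigma_{i_{n-1}}w'$, so that summand certainly lies in $R^K_+$; for the other summand a parallel argument, invoking the minimality of $d(\alpha) = n$ to exclude it from the $n-2$ positive roots of $R^{K_{n-2}}$ flipped to negatives by $(w')^{-1}$, will yield the remaining positivity. The hard part will be precisely this last step: verifying that both summands of the rank-two decomposition at $K_{n-2}$ pull back to positive roots at $K$, which rests on the detailed combinatorics of reduced expressions in the Weyl groupoid and is the reason the paper refers to \cite{p-CH09c} rather than giving the argument directly.
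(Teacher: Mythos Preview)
The paper itself does not prove this theorem: it is one of the two results explicitly excluded (see the end of the introduction and the sentence preceding Theorem~\ref{sumofroots}, ``we omit the proof because it involves the length function of a Weyl groupoid''). So there is no proof in the paper to compare against beyond the hint that the length function is the key tool, and in that respect your plan is aligned with what the paper indicates.

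That said, your sketch has a genuine gap at precisely the point you flag. You assert that ``minimality of $d(\alpha)=n$'' excludes the summand $\beta=(m-1)\alpha_{i_{n-1}}+\alpha_{i_n}$ from the inversion set of $(w')^{-1}$, but you give no mechanism. Concretely: suppose $(w')^{-1}(\beta)=-\tau$ with $\tau\in R^K_+$. Then $\tau$ lies in the inversion set of $w'$, so $\tau=\theta_k$ for some $k\le n-2$ in the standard list $\theta_1,\ldots,\theta_n$ of inversion roots of the reduced word $w=\sigma_{i_n}\cdots\sigma_{i_1}$; unwinding $\alpha=(w')^{-1}(\beta)+(w')^{-1}(\alpha_{i_{n-1}})$ gives $\theta_{n-1}=\theta_n+\theta_k$. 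Such an additive relation among inversion roots is perfectly compatible with the word being reduced and with $d(\alpha)=n$, so no contradiction is visible. Nor can you choose a different decomposition at $K_{n-2}$: since $\alpha^{(n-2)}$ has $\alpha_{i_n}$-coordinate equal to~$1$, the only way to write it as a sum of two positive roots inside the rank-two localization $\langle\alpha_{i_{n-1}},\alpha_{i_n}\rangle$ is $\beta+\alpha_{i_{n-1}}$. Your ``parallel argument'' for $\gamma=\alpha_{i_{n-1}}$ worked because $(w')^{-1}(\alpha_{i_{n-1}})$ is literally the $(n-1)$st inversion root of the reduced word $\sigma_{i_{n-1}}w'$; there is no analogous reduced word exhibiting $(w')^{-1}(\beta)$ as an inversion root without further input. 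The argument in \cite{p-CH09c} does pass through the length function, but completing this step requires more than the inversion-set bookkeeping you outline here.
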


Corollary \ref{ranktwo} (\ref{con2}) extends to arbitrary rank as well, see Lemma \ref{lemcon} below.

\subsection{Localizations in rank three}

In this section we assume that $r=3$, i.e.\ $V=\RR^3$.

\begin{lemma}[cf.\ {\cite[Prop.\,4.6]{p-CH09a}} or {\cite[Lem.\ 3.2]{p-C12}}]\label{adj2}
Let $(\Ac,V,\RS)$ be a crystallographic arrangement of rank three and $K$ a chamber.
Then $(\Ac,V)$ is reducible if $|R^K_+\cap \langle \alpha_1, \alpha_2\rangle|=|R^K_+\cap \langle \alpha_1, \alpha_3\rangle|=2$.
\end{lemma}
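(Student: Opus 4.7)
The plan is to show that under the hypothesis, $R^K$ decomposes as $\{\pm\alpha_1\}\,\dot\cup\,(R^K\cap\langle\alpha_2,\alpha_3\rangle_\RR)$; by Remark \ref{locroots} this corresponds to a product decomposition of $\Ac$ and hence establishes reducibility.

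First I would translate the numerical hypothesis into Cartan data. Since $R^K_+\cap\langle\alpha_1,\alpha_2\rangle$ has exactly two elements, these must be $\alpha_1$ and $\alpha_2$, so no vector of the form $k\alpha_1+\alpha_2$ with $k\ge 1$ lies in $R^K$. The formula for $c^K_{i,j}$ recalled in Definition \ref{rootscartan} then immediately gives $c^K_{1,2}=0$, and by symmetry $c^K_{2,1}=0$. The same argument applied to the pair $(\alpha_1,\alpha_3)$ yields $c^K_{1,3}=c^K_{3,1}=0$.

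Next I would compute the reflection $\sigma^K_1$. With $c^K_{1,2}=c^K_{1,3}=0$, the relation $\sigma^K_1(\alpha_j)=\alpha_j-c^K_{1,j}\alpha_1$ collapses to $\sigma^K_1(\alpha_1)=-\alpha_1$ and $\sigma^K_1(\alpha_j)=\alpha_j$ for $j\in\{2,3\}$. Consequently, for any positive root $\beta=a_1\alpha_1+a_2\alpha_2+a_3\alpha_3\in R^K_+$, one has $\sigma^K_1(\beta)=-a_1\alpha_1+a_2\alpha_2+a_3\alpha_3$.

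The crucial step is then to combine this with the fact that $\sigma^K_1(R^K)=R^{\rho_1(K)}\subseteq\NN_0^3\cup -\NN_0^3$, as also recorded in Definition \ref{rootscartan}. For the vector $(-a_1,a_2,a_3)$ to lie in $\NN_0^3\cup -\NN_0^3$ when $a_1,a_2,a_3\ge 0$, either $a_1=0$ or $a_2=a_3=0$ must hold. Thus every positive root either lies in $\langle\alpha_2,\alpha_3\rangle_\RR$ or is a nonnegative multiple of $\alpha_1$; the axiom $\RR\alpha_1\cap\RS=\{\pm\alpha_1\}$ narrows the latter to $\alpha_1$ itself, producing the required partition.

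I do not anticipate a serious obstacle. The only delicate point is to keep in mind that $\sigma^K_1$ plays a double role: it is both the $\RR$-linear endomorphism of $\RR^3$ given by the Cartan-matrix formula and the coordinate change $\coord^{\rho_1(K)}\circ(\coord^K)^{-1}$. It is this second description which forces $\sigma^K_1(R^K)$ to land in the union of the positive and negative orthants of $\ZZ^3$, and hence drives the whole argument.
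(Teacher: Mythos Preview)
Your proof is correct and is essentially the paper's argument spelled out algebraically: both derive $\sigma_1^K(\alpha_2)=\alpha_2$, $\sigma_1^K(\alpha_3)=\alpha_3$ from the hypothesis and conclude that every positive root other than $\alpha_1$ lies in $\langle\alpha_2,\alpha_3\rangle$ (the paper phrases this last step geometrically as ``$\rho_1(K)$ is also adjacent to the localization $\langle\alpha_2,\alpha_3\rangle$'' and then names the resulting configuration a \emph{near pencil}). One minor quibble: Remark~\ref{locroots} treats localizations rather than product decompositions, so it is not quite the right citation, but the passage from your decomposition $R^K=\{\pm\alpha_1\}\,\dot\cup\,(R^K\cap\langle\alpha_2,\alpha_3\rangle)$ to reducibility of $\Ac$ is elementary in any case.
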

\begin{proof}
Since $\sigma_1^K(\alpha_2)=\alpha_2$, $\sigma_1^K(\alpha_3)=\alpha_3$, the chamber $\rho_1(K)$ is also adjacent to the localization $\langle \alpha_2,\alpha_3\rangle$.
But then any further $\beta\in R_+^K\backslash \{\alpha_1\}$ is in $\langle \alpha_2,\alpha_3\rangle$,
thus $\Ac$ is a so-called near pencil arrangement which is reducible.
\end{proof}

\begin{defin}\label{quiaux}
Let $(\Ac,V,\RS)$ be a crystallographic arrangement, $K_1$ a chamber, $1\le i\ne j\le r$, and $n:=|\langle\alpha_i,\alpha_j\rangle\cap R^K_+|$.
We denote the $2n$ chambers adjacent to the localization $\langle\alpha_i,\alpha_j\rangle$ by $K_1,\ldots,K_{2n}$:
for $\ell>1$, let
\[ K_\ell := \begin{cases}
\rho_i(K_{\ell-1}) & \text{if } \ell \text{ is even}, \\
\rho_j(K_{\ell-1}) & \text{if } \ell \text{ is odd}.
\end{cases}
\]
Notice that $K_{2n+1}=K_1$.
This sequence of chambers yields two sequences of integers:
\[ c_\ell := \begin{cases}
-c_{i,j}^{K_\ell} & \text{if } \ell \text{ is odd}, \\
-c_{j,i}^{K_\ell} & \text{if } \ell \text{ is even},
\end{cases}
\quad \quad
d_\ell := \begin{cases}
-c_{i,k}^{K_\ell} & \text{if } \ell \text{ is odd}, \\
-c_{j,k}^{K_\ell} & \text{if } \ell \text{ is even}
\end{cases}
\]
for $\ell=1,\ldots,2n$ and the unique $k\notin \{i,j\}$ with $1\le k\le r=3$.
We call $(c_1,\ldots,c_n)$ the \df{quiddity cycle} and $(d_1,\ldots,d_{2n})$ the \df{auxiliary cycle} of the localization $\langle \alpha_i,\alpha_j\rangle$.
\end{defin}

\begin{remar}
Let $(c_1,\ldots,c_{2n})$ be the above sequence. Then $(c_1,\ldots,c_n)=(c_{n+1},\ldots,c_{2n})$ and
$(c_1,\ldots,c_n)$ is the quiddity cycle of the frieze pattern associated to the localization at $\langle\alpha_i,\alpha_j\rangle$ as presented in Remark \ref{quid121}.
\end{remar}

\begin{figure}[ht]
\includegraphics[width=0.8\textwidth]{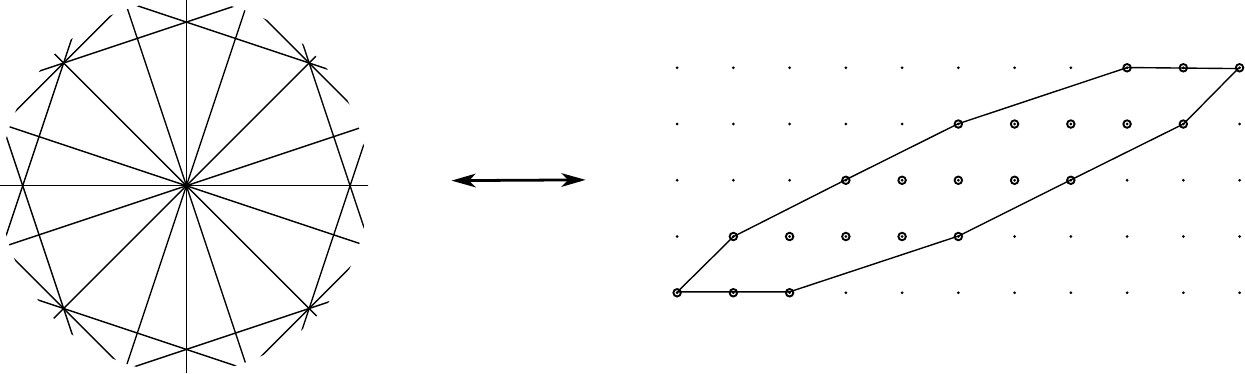}
\caption{A localization and the roots on the boundary in the dual space.\label{fig:local}}
\end{figure}

Figure \ref{fig:local} shows a part of a crystallographic arrangement of rank three on the left.
The right picture depicts the roots of the form $k\alpha_1+\ell\alpha_2+\alpha_3\in R^K_+$, i.e.\ all roots on the ``$(*,*,1)$-plane''.
Here, $\alpha_1$ and $\alpha_2$ are simple roots for a chamber of the localization of rank two with eight positive roots on the left.
The following proposition explains the ``hull'' of the convex set on the right:

\begin{propo}\label{plane001}
Let $(\Ac,V,\RS)$ be an irreducible crystallographic arrangement of rank three and $K$ a chamber.
Let $\beta_1=(0,1,0),\beta_2,\ldots,\beta_{n-1},\beta_n=(1,0,0)$ be the roots in the localization $\langle \alpha_1,\alpha_2\rangle$ ordered in such a way that $(\beta_1,\ldots,\beta_n)$ is an $\cEs$-sequence (ignoring the third coordinate $0$).
Let $(d_1,\ldots,d_{2n})$ be the auxiliary cycle of the localization $\langle \alpha_2,\alpha_1\rangle$.
\begin{enumerate}
\item Then \[ \gamma_\ell:=\alpha_3+\sum_{k=1}^\ell d_k \beta_k, \quad \delta_\ell:=\alpha_3+\sum_{k=1}^\ell d_{2n+1-k} \beta_{n+1-k}, \]
$\ell=0,\ldots,n$ are positive roots in $R^K$ with third coordinate $1$.
These are the vertices of the convex set in the $(*,*,1)$-plane.
\item\label{consec_d} There are no consecutive $d_\ell$'s both equal to $0$.
\item $|\{\gamma_\ell \mid \ell=0,\ldots,n\}|\ge n/2$ and $\gamma_{\ell+1}-\gamma_\ell\in \NN_0^3$.\label{plane001vert}
\end{enumerate}
\end{propo}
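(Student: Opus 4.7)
The plan is to treat the three parts in order, with the bulk of the work going into part~(1).

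For part~(1), I would prove by simultaneous induction on $\ell$ that in $K$-coordinates,
\[
\coord^K(\alpha_3^{K_\ell})=\gamma_{\ell-1},\qquad \coord^K(\alpha_{p_\ell}^{K_\ell})=\beta_\ell,\qquad \coord^K(\alpha_{q_\ell}^{K_\ell})=-\beta_{\ell-1},
\]
where $p_\ell\in\{i,j\}$ is the index of the wall crossed in passing from $K_\ell$ to $K_{\ell+1}$, $q_\ell$ is the other localization index, and $\beta_0:=-\beta_n$. The base case $\ell=1$ records exactly that at $K_1=K$ the simple roots are $e_1,e_2,e_3$. For the inductive step I apply the reflection $\sigma_{p_\ell}^{K_\ell}$ (which realizes the morphism $\alpha_s^{K_\ell}\mapsto \alpha_s^{K_{\ell+1}}$) and use the Cartan formula together with the frieze recursion $\beta_{\ell+1}=c_\ell\beta_\ell-\beta_{\ell-1}$ implicit in the $\cEs$-sequence/quiddity cycle correspondence (Remark~\ref{quid121}); because $q_\ell=p_{\ell+1}$ the three identities at level $\ell$ transform directly into the three identities at level $\ell+1$. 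Since every reflection $\sigma_p^{K_m}$ with $p\in\{i,j\}$ fixes the third coordinate, each $\gamma_\ell$ has third coordinate~$1$ and non-negative entries, hence $\gamma_\ell\in R^K_+$. The $\delta_\ell$'s arise from the analogous induction going the other way around the localization, through $K_1,K_{2n},K_{2n-1},\dots$; together with the $\gamma_\ell$'s they exhaust the ``third simple roots'' of all chambers adjacent to the localization, which gives the vertex interpretation.

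For part~(2), I argue by contradiction. Suppose $d_\ell=d_{\ell+1}=0$; by symmetry we may take $\ell$ odd, so $c_{i,3}^{K_\ell}=0$ and $c_{j,3}^{K_{\ell+1}}=0$. The first equation means the $2$-plane $\langle\alpha_i^{K_\ell},\alpha_3^{K_\ell}\rangle$ contains only the four roots $\pm\alpha_i^{K_\ell},\pm\alpha_3^{K_\ell}$. But $d_\ell=0$ forces $\alpha_3^{K_{\ell+1}}=\alpha_3^{K_\ell}$ and $\alpha_i^{K_{\ell+1}}=-\alpha_i^{K_\ell}$ in $V^*$, so this same $2$-plane equals $\langle\alpha_i^{K_{\ell+1}},\alpha_3^{K_{\ell+1}}\rangle$. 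The second equation likewise gives $\langle\alpha_j^{K_{\ell+1}},\alpha_3^{K_{\ell+1}}\rangle$ with only two positive roots. Both $2$-planes are spanned by pairs of simple roots of $K_{\ell+1}$ sharing $\alpha_3^{K_{\ell+1}}$, so Lemma~\ref{adj2} applied at $K_{\ell+1}$ forces $\Ac$ to be reducible, contradicting the standing hypothesis. The even case is symmetric with the roles of $i$ and $j$ exchanged.

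For part~(3), the equality $\gamma_{\ell+1}-\gamma_\ell=d_{\ell+1}\beta_{\ell+1}$ lies in $\NN_0^3$ because both factors are non-negative. Moreover $\gamma_{\ell-1}=\gamma_\ell$ exactly when $d_\ell=0$; by part~(2) no two consecutive entries of $d_1,\dots,d_n$ vanish, so at least $\lfloor n/2\rfloor$ of them are strictly positive, giving at least $1+\lfloor n/2\rfloor\ge n/2$ distinct values among $\gamma_0,\dots,\gamma_n$. The main obstacle I expect is the careful bookkeeping in part~(1): matching the swap $q_\ell\leftrightarrow p_{\ell+1}$ that occurs at each reflection to the frieze recursion for the $\beta$'s with the slightly unusual initial convention $\beta_0=-\beta_n$; once this is lined up, the rest of the argument is essentially mechanical.
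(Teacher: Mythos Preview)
Your proposal is correct and follows essentially the same route as the paper. For part~(1) the paper also inducts along the chambers $K_1,K_2,\ldots$ and computes $w^{K_\ell,K}(\alpha_3)=w^{K_{\ell-1},K}(\alpha_3)+d_{\ell-1}\beta_{\ell-1}$, which is exactly your identity $\coord^K(\alpha_3^{K_\ell})=\gamma_{\ell-1}$; the paper simply asserts $\beta_\ell=w^{K_\ell,K}(\alpha_i)$ without argument, whereas you supply it via the frieze recursion and the tracking of $\alpha_{p_\ell},\alpha_{q_\ell}$, so your version is in fact more complete. Parts~(2) and~(3) match the paper's argument almost verbatim (Lemma~\ref{adj2} at the intermediate chamber, then counting nonzero $d_\ell$'s).
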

\begin{proof}
(1) Let $K_\ell$ and $c_\ell$ be as in Definition \ref{quiaux}.
Notice first that $\beta_\ell = w^{K_\ell,K}(\alpha_i)$ for some $i\in \{1,2\}$ depending on the parity of $\ell$.
Then for $\ell>1$,
\[
w^{K_\ell,K}(\alpha_3)
= w^{K_{\ell-1},K}(\sigma_i^{K_\ell}(\alpha_3))
= w^{K_{\ell-1},K}(\alpha_3 - c_{i,3} \alpha_i)
= w^{K_{\ell-1},K}(\alpha_3) + d_{\ell-1} \beta_{\ell-1}
\]
for some $i\in \{1,2\}$.
Since $w^{K_1,K}(\alpha_3)=\alpha_3$, the claim for the $\gamma_\ell$'s follows by induction.
The case of the $\delta_\ell$'s can be treated similarly.\\
(2) If $d_\ell=d_{\ell+1}=0$ for some $\ell$, then there exists a chamber $\tilde K$ with two localizations adjacent to $\tilde K$ with only $2$ positive roots; this is impossible by Lemma \ref{adj2} since the arrangement is simplicial and irreducible.\\
(3) This is by definition of $\gamma_\ell$ and by (2).
\end{proof}

The next lemma is a crucial tool. It extends the convexity which was observed in rank two to localizations and may be applied to pairs of roots in the $(*,*,1)$-plane:

\begin{lemma}[cf.\ {\cite[Lem.\ 3.4 and Lem.\ 3.2]{p-CH09c}}]\label{lemcon}
Let $(\Ac,V,\RS)$ be a crystallographic arrangement, $K$ a chamber, $k\in\NN_{\ge 2}$, $\alpha\in R^K_+$, $\beta\in \ZZ^r$, $\dim\langle \alpha,\beta\rangle_\QQ = 2$, $\alpha+k\beta\in R^K$, $\Vol_2(\alpha,\beta)=1$, and $(-\NN\alpha+\ZZ\beta)\cap \NN_0^r = \emptyset$.\\
Then $\beta\in R^K$ and $\alpha+\ell\beta\in R^K$ for all $\ell=0,\ldots,k$.
Moreover, there exists a chamber $K'$ and $1\le i,j \le r$ such that $-c^{K'}_{i,j}\ge k$.
\end{lemma}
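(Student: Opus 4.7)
The plan is to reduce Lemma~\ref{lemcon} to the rank-two convexity Corollary~\ref{ranktwo}(2), carried out in the two-dimensional localization $X:=\langle\alpha,\beta\rangle_\RR$.

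First I would check $\alpha+k\beta\in R^K_+$: if $\alpha+k\beta\in-R^K_+$, then $-\alpha+(-k)\beta\in\NN_0^r$, contradicting the hypothesis $(-\NN\alpha+\ZZ\beta)\cap\NN_0^r=\emptyset$ at $n=1$, $m=-k$. Next I localize: by Remark~\ref{locroots}, $S:=R^K\cap X$ is a rank-two crystallographic sub-arrangement; pick simple roots $\gamma_1,\gamma_2\in R^K_+\cap X$ of $S$ via Lemma~\ref{ssloc} and write $\alpha=a_1\gamma_1+a_2\gamma_2$, $\alpha+k\beta=b_1\gamma_1+b_2\gamma_2$ with $a_i,b_i\ge 0$. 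A direct computation of $2\times 2$ minors yields $\Vol_2^{\ZZ^r}(\alpha,k\beta)=|a_1b_2-a_2b_1|\cdot\Vol_2^{\ZZ^r}(\gamma_1,\gamma_2)$, which combined with $\Vol_2^{\ZZ^r}(\alpha,k\beta)=k$ and the fact---standard for two-dimensional localizations of crystallographic arrangements---that $\gamma_1,\gamma_2$ generate $\ZZ^r\cap X$ gives $|a_1b_2-a_2b_1|=k$; in particular $\beta\in\ZZ\gamma_1+\ZZ\gamma_2$ and $|\det(\alpha,\beta)|=1$ in the local basis.

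Writing $\beta=c_1\gamma_1+c_2\gamma_2$, I rule out the ``mixed sign'' case $c_1c_2<0$: combined with $|a_1c_2-a_2c_1|=1$ and the requirement $a_i+kc_i\ge 0$ (forced by $\alpha+k\beta\in R^K_+$), such signs would force $k\le 1$, contradicting $k\ge 2$. After possibly replacing $\beta$ by $-\beta$, the vector $\beta$ is a primitive element of the local positive cone with $|\det(\alpha,\beta)|=1$. The $\cEs$-sequence structure of $S$ (Theorem~\ref{thmranktwo}), combined with the strong constraint that the root $\alpha+k\beta$ sits at position $(1,k)$ relative to the would-be basis $(\alpha,\beta)$ with $k\ge 2$, then forces $\beta$ itself to be a root of $S$ and the pair $(\alpha,\beta)$ to be a set of simple roots for some chamber $K_0$ of $S$. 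Corollary~\ref{ranktwo}(2) applied at $K_0$ with the roles of the two simple roots exchanged yields $\alpha+\ell\beta\in R^K$ for all $\ell=0,\ldots,k$.

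Finally, any ambient chamber $K'$ projecting to $K_0$ has $\alpha,\beta$ among its simple roots, say $\beta=\alpha_i^{K'}$ and $\alpha=\alpha_j^{K'}$, whence $-c^{K'}_{i,j}=\max\{n\mid n\beta+\alpha\in R^{K'}\}\ge k$. The main obstacle I expect is verifying that the primitive lattice vector $\beta$ of the local positive cone is actually a root of $S$ (not merely an element of $\ZZ^r\cap X$); this uses the combinatorics of $\cEs$-sequences in an essential way, with Lemma~\ref{adj2} available as an auxiliary tool to exclude near-pencil degenerations inside the localization.
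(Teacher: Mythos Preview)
Your overall plan---localize at $X=\langle\alpha,\beta\rangle$ and reduce to the rank-two convexity of Corollary~\ref{ranktwo}(2)---is exactly the paper's, and your sign analysis on $(c_1,c_2)$ is sound: the hypothesis $(-\NN\alpha+\ZZ\beta)\cap\NN_0^r=\emptyset$ (via $\NN_0\gamma_1+\NN_0\gamma_2\subseteq\NN_0^r$) excludes $c_1,c_2$ of the same strict sign, and your determinant computation together with $b_1,b_2\ge 0$ excludes mixed signs when $k\ge 2$. So one $c_i$ vanishes and $\Vol_2=1$ forces $\beta=\pm\gamma_i$, hence $\beta\in R^K$. (The paper reaches the same conclusion by the dual bookkeeping: it writes $\gamma_1,\gamma_2$ in the basis $(\alpha,\beta)$, assumes $\beta\notin R^K$, and forces $\{\gamma_1,\gamma_2\}=\{\alpha,\alpha+k\beta\}$, contradicting $\Vol_2(\gamma_1,\gamma_2)=1$.) Two side remarks: the ``replace $\beta$ by $-\beta$'' step is illegitimate since it alters the hypothesis $\alpha+k\beta\in R^K$, but it is also unnecessary; and Lemma~\ref{adj2} concerns rank-three reducibility and plays no role here.

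The genuine gap is the next sentence: the claim that ``the pair $(\alpha,\beta)$ is a set of simple roots for some chamber $K_0$ of $S$'' is false. Take the rank-two crystallographic arrangement with $R^K_+=\{(0,1),(1,3),(1,2),(1,1),(1,0)\}$ and set $\alpha=(1,1)$, $\beta=(0,1)$, $k=2$; then all hypotheses of the lemma hold, yet $\ker\alpha$ and $\ker\beta$ are not adjacent hyperplanes, so no chamber of $S$ has $\{\alpha,\beta\}$ as simple roots. In fact this obstruction is forced by your own hypotheses: once $\beta=\gamma_2$ and $\alpha=\gamma_1+a\gamma_2$, the root $\alpha+\beta=\gamma_1+(a{+}1)\gamma_2$ always lies strictly between $\alpha$ and $\beta$ in the angular order. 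Consequently your invocation of Corollary~\ref{ranktwo}(2) ``at $K_0$'' and the subsequent reading of $-c^{K'}_{i,j}$ from the simple pair $(\alpha,\beta)$ are vacuous. The fix is exactly what the paper does: do not try to make $\alpha$ simple. Work at a chamber whose simple pair in the localization is $(\delta,\beta)$ with $\beta$ simple; since $\Vol_2(\alpha,\beta)=1$ one has $\alpha=k_1\beta+\delta$ for some $k_1\ge 0$, so $\alpha+k\beta=(k_1{+}k)\beta+\delta$ and Corollary~\ref{ranktwo}(2) applied to the simple pair $(\delta,\beta)$ gives $\delta+\ell\beta\in R$ for $0\le\ell\le k_1{+}k$, whence $\alpha+\ell\beta\in R$ for $0\le\ell\le k$ and $-c^{K'}_{i,j}\ge k_1+k\ge k$ at the corresponding ambient chamber.
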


\begin{proof}
We consider the localization $S_{K,X}:=X \cap R^K$ for $X:=\langle \alpha,\alpha+k\beta\rangle$.
Assume that $\beta \notin R^K$.
By Lemma~\ref{ssloc} there exist roots $\gamma _1,\gamma_2\in {S_{K,X}}_+$ such that ${S_{K,X}}_+\subseteq \NN_0\gamma_1+\NN_0\gamma_2$.
Since $\alpha,\beta\in X$ and $\Vol_2(\alpha,\beta)=1$, there exist
$m_1,\ell_1\in \ndZ$ and $m_2,\ell_2\in \ndZ $
such that $\gamma _1=m_1\al +m_2\beta $, $\gamma _2=\ell_1\al +\ell_2\beta $.
Notice that $m_1,\ell_1\ge 0$ because $(-\NN\alpha+\ZZ\beta)\cap \NN_0^r = \emptyset$;
since $\beta \notin R^a$, we even have that $m_1\ge 1$ and $\ell_1\ge 1$.

Now $\gamma_1,\gamma_2$ were chosen such that $\alpha\in \NN_0\gamma_1+\NN_0\gamma_2$, thus there exist $m,\ell\in\NN_0$ with $\alpha=m\gamma_1+\ell\gamma_2$,
hence $mm_1+\ell\ell_1=1$ and $mm_2+\ell\ell_2=0$ which implies $m=0$ or $\ell=0$.
If $k=0$, then $\ell=\ell_1=1$ and thus $\ell_2=0$ and $\gamma_2=\alpha$. If $\ell=0$, then $\gamma_1=\alpha$.

Assume without loss of generality that $\gamma_1=\alpha$. Then there exist $m',\ell'\in\NN_0$ such that $\alpha+k\beta=m'\alpha+\ell'\gamma_2$ and
hence $m'+\ell'\ell_1=1$, $k = \ell'\ell_2$. If $m'=1$, then $\ell'=0$ which contradicts $k\ge 2$. Thus $m'=0$, $\ell'=\ell_1=1$, and $k=\ell_2$; we get $\gamma_2=\alpha+k\beta$.
But this is a contradiction, since
$1=\Vol _2(\gamma _1,\gamma _2)=\Vol _2(\al ,\al +k\beta )=k>1$.
Thus $\beta \in R^K$.

For the remaining claims,
choose a chamber $K'$ adjacent to $\ker (\coord^K)^{-1}(\alpha) \cap \ker (\coord^K)^{-1}(\beta)$ such that $\beta':=w^{K,K'}(\beta)$ is simple and $\alpha':=w^{K,K'}(\alpha)\in R^{K'}_+$.
Then there is a $\delta\in R^{K'}_+$ such that $\beta',\delta$ are the simple roots of the localization $\langle \alpha',\beta'\rangle$,
and $\alpha' = k_1\beta'+k_2\delta$ for some $k_1,k_2\in \NN_0$. Now $k_2=1$ because $\Vol_2(\alpha',\beta')=1$.
But then $\delta$, $\alpha'+k\beta'=\delta+(k+k_1)\beta' \in R^{K'}_+$ and thus $\delta+\ell \beta'\in R^{K'}$ for $\ell=0,\ldots,k_1+k$ by Corollary \ref{ranktwo}.
In particular, $-c^{K'}_{i,j}\ge k+k_1$ if $\delta=\alpha_i$ and $\beta'=\alpha_j$ (see Definition \ref{rootscartan}).
\end{proof}

\begin{examp}
\begin{figure}[ht]
\includegraphics[width=0.4\textwidth]{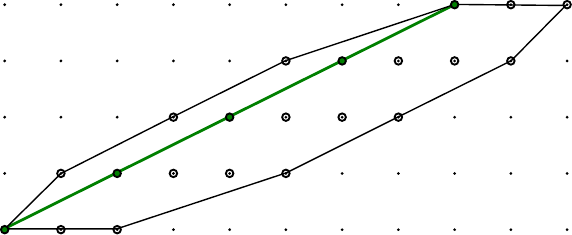}
\caption{Lemma \ref{lemcon} applied to the $(*,*,1)$-plane.\label{fig:lemcon}}
\end{figure}
With $\alpha=(0,0,1)$, $\beta=(2,1,0)$, and $k=4$, Lemma \ref{lemcon} implies the existence of the roots on the green line in Figure \ref{fig:lemcon}.
In fact, Lemma \ref{lemcon} implies that all lattice points in the convex set in Figure \ref{fig:lemcon} are roots.
\end{examp}

The next theorem is stronger than expected. If three roots have volume $1$, then they are close to be the walls of a chamber:

\begin{theor}[cf.\ {\cite[Thm.\ 3.10]{p-CH09c}}]\label{root_diffs}
Let $K$ be a chamber and $\al ,\beta,\gamma\in R^K_+$.
If $\Vol_3(\al ,\beta,\gamma)=1$
and none of $\alpha-\beta$, $\alpha-\gamma$, $\beta-\gamma$ are contained in
$R^K$, then $\al ,\beta,\gamma$ are the simple roots in $R^K$.
\end{theor}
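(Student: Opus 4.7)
The plan is a proof by contradiction via induction on the sum of the heights of $\alpha$, $\beta$, $\gamma$ (where height means the sum of simple-root coordinates). Suppose $(\alpha,\beta,\gamma)$ is a counterexample of minimal total height. Not all three roots can be simple, so after relabelling I may assume $\gamma$ is not simple and has maximal height, hence height at least $2$.

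First I extract a linear-algebra fact: expanding $\Vol_3(\alpha,\beta,\gamma)=\pm 1$ by Laplace cofactors along any column writes $\pm 1$ as a $\ZZ$-linear combination of the $2\times 2$ minors of the other two columns. Hence $\Vol_2(\alpha,\beta)=\Vol_2(\alpha,\gamma)=\Vol_2(\beta,\gamma)=1$, and $(\alpha,\beta,\gamma)$ is a $\ZZ$-basis of $\ZZ^3$. I then apply Theorem \ref{sumofroots} to $\gamma$ to obtain $\gamma=\gamma'+\gamma''$ with $\gamma',\gamma''\in R^K_+$, both of strictly smaller height. The non-root-difference hypothesis immediately rules out $\gamma',\gamma''\in\{\alpha,\beta\}$: if $\gamma'=\alpha$ then $\gamma''=\gamma-\alpha\in R^K$, contradicting the assumption. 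Writing $\gamma'=a\alpha+b\beta+c\gamma$ in the basis $(\alpha,\beta,\gamma)$ gives $\gamma''=-a\alpha-b\beta+(1-c)\gamma$, and after possibly swapping $\gamma'\leftrightarrow\gamma''$ I may assume $c\in\{0,1\}$ or $c\ge 2$.

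In the main case $c=0$ (the case $c=1$ is symmetric), $\gamma'=a\alpha+b\beta$ lies in $\langle\alpha,\beta\rangle$, and a column operation shows $\Vol_3(\alpha,\beta,\gamma'')=\Vol_3(\alpha,\beta,\gamma)=1$. If $(\alpha,\beta,\gamma'')$ also satisfies the non-root-difference hypothesis, then by induction $\{\alpha,\beta,\gamma''\}$ is the simple root set; after relabelling, $\alpha=\alpha_1$, $\beta=\alpha_2$, $\gamma''=\alpha_3$, and $\gamma=a\alpha_1+b\alpha_2+\alpha_3$ with $a,b\ge 1$. I would then analyse the $(*,*,1)$-plane via Proposition \ref{plane001}: the positive roots with third coordinate $1$ form a convex set of which $\alpha_3=(0,0,1)$ and $\gamma=(a,b,1)$ are vertices, and applying Lemma \ref{lemcon} to consecutive pairs of vertices along the boundary forces all lattice points in the convex region to lie in $R^K$. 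From this I would extract a root of the form $\gamma-\alpha_1=(a-1,b,1)$ or $\gamma-\alpha_2=(a,b-1,1)$, contradicting the hypothesis on pairwise differences. If instead $(\alpha,\beta,\gamma'')$ violates a difference condition, say $\alpha-\gamma''\in R^K$, then $(a+1)\alpha+b\beta-\gamma\in R^K$, and a further application of Lemma \ref{lemcon} in the plane $\langle\alpha,\gamma\rangle$ produces a forbidden root.

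For the remaining case $c\ge 2$, the height equality---the heights of $\gamma'$ and $\gamma''$ sum to that of $\gamma$---combined with the explicit formulae for $\gamma',\gamma''$ and positivity in simple-root coordinates forces $a$ or $b$ to be negative and yields coordinatewise inequalities so restrictive that a direct application of Lemma \ref{lemcon} produces a pairwise-difference root. The principal obstacle will be the convexity step in the main case: translating the vertex-and-edge description of Proposition \ref{plane001} together with Lemma \ref{lemcon} into the precise claim that $\gamma-\alpha_1$ or $\gamma-\alpha_2$ lies in $R^K$. This requires careful tracking of the $\cEs$-sequence and auxiliary cycle in the rank-two localization at $\langle\alpha_1,\alpha_2\rangle$ and understanding which pairs of vertices of the convex region admit Lemma \ref{lemcon} application.
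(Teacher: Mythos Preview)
The paper does not prove this theorem at all: immediately after the statement it writes ``We omit the proof since we do not need this result for our bounds, but notice that this Theorem~\ref{root_diffs} is the one in \cite{p-CH09c} with the most technical proof.'' So there is no proof here to compare against, only a pointer to the original source and a warning that the argument is long.

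Your outline is a reasonable opening, but the heart of the argument is missing, and the gap is exactly where you flag the ``principal obstacle''. In your main case you reduce to: $\alpha=\alpha_1$, $\beta=\alpha_2$, $\gamma''=\alpha_3$ are simple and $\gamma=(a,b,1)$ with $a,b\ge 1$; you then need that $\gamma-\alpha_1$ or $\gamma-\alpha_2$ lies in $R^K$. But that statement is precisely Corollary~\ref{convex_diff2}, which in this paper (and in \cite{p-CH09c}) is deduced \emph{from} Theorem~\ref{root_diffs}. So invoking it would be circular, and your proposed substitute --- extracting the conclusion directly from Proposition~\ref{plane001} and Lemma~\ref{lemcon} --- is not known to work: the paper's own Remark after Example~\ref{bluepath} states that a short proof that every lattice point in the convex hull of the $(*,*,1)$-roots is a root is still open. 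Proposition~\ref{plane001} only gives you the boundary vertices $\gamma_\ell,\delta_\ell$, and Lemma~\ref{lemcon} only fills in segments of the form $\alpha+\ell\beta$ under a $\Vol_2=1$ hypothesis; neither lets you conclude anything about the specific interior point $(a-1,b,1)$ or $(a,b-1,1)$ without essentially reproving the theorem.

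The remaining branches are also not under control. When $(\alpha,\beta,\gamma'')$ violates a difference condition, say $\alpha-\gamma''\in R^K$, you have $\alpha-\gamma+\gamma'\in R^K$, but your claim that Lemma~\ref{lemcon} then produces one of $\alpha-\gamma$, $\beta-\gamma$, $\alpha-\beta$ in $R^K$ needs a concrete $\Vol_2=1$ pair and a sign check of the form $(-\NN\alpha+\ZZ\beta)\cap\NN_0^r=\emptyset$, neither of which you supply. The case $c\ge 2$ is asserted to collapse ``by height and positivity'', but writing $\gamma'=a\alpha+b\beta+c\gamma$ with $c\ge 2$ gives no immediate coordinatewise contradiction: $\alpha,\beta,\gamma$ are a $\ZZ$-basis, not the standard one, so positivity of $\gamma'$ in the simple-root basis says nothing direct about the signs of $a,b,c$. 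In short, the induction scaffold is sensible, but each case bottoms out in a statement of comparable difficulty to the theorem itself; this is consistent with the paper's remark that the original proof is the most technical in \cite{p-CH09c}.
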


We omit the proof since we do not need this result for our bounds, but notice that this Theorem~\ref{root_diffs} is the one in \cite{p-CH09c} with the most technical proof.
We obtain the following corollary which almost states that the $(*,*,1)$-plane is convex (see Example \ref{bluepath}):

\begin{corol}[cf.\ {\cite[Cor.\ 3.11]{p-CH09c}}]\label{convex_diff2}
Let $K$ be a chamber and $\gamma _1,\gamma _2,\al \in R^K$.
Assume that $\gamma_1,\gamma_2$ are simple roots and
that $\Vol _3(\gamma _1,\gamma _2,\al )=1$.
Then either
$\al$ is a simple root
or one of $\al -\gamma _1$, $\al-\gamma _2$ is contained in $R^K$.
\end{corol}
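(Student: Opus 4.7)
The plan is to reduce the claim directly to Theorem~\ref{root_diffs}, since that result is essentially the contrapositive of what we want, modulo positivity of the roots.

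First I would normalize signs. The hypotheses and the conclusion are both invariant under $\alpha\mapsto-\alpha$: we have $\alpha-\gamma_i\in R^K$ iff $(-\alpha)-(-\gamma_i)=-(\alpha-\gamma_i)\in R^K$, and ``$\alpha$ is a simple root'' is likewise symmetric under negation since $R^K=-R^K$. Because $\gamma_1,\gamma_2$ are simple they lie in $R^K_+$, so by replacing $\alpha$ by $-\alpha$ if necessary I may assume $\alpha\in R^K_+$ as well. Also $\Vol_3(\gamma_1,\gamma_2,\alpha)=1\neq 0$ forces $\gamma_1,\gamma_2,\alpha$ to be $\QQ$-linearly independent, and in particular $\gamma_1\neq\gamma_2$.

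Next I would handle the ``free'' difference $\gamma_1-\gamma_2$. Since $\gamma_1=\alpha_i$ and $\gamma_2=\alpha_j$ are distinct simple roots, $\gamma_1-\gamma_2$ has one coordinate equal to $+1$ and another equal to $-1$ in the basis of simple roots, hence does not lie in $\NN_0^r\cup-\NN_0^r$ and therefore is not in $R^K$.

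Now I argue by contraposition. Assume neither $\alpha-\gamma_1$ nor $\alpha-\gamma_2$ lies in $R^K$. Combined with $\gamma_1-\gamma_2\notin R^K$, all three pairwise differences of $\gamma_1,\gamma_2,\alpha\in R^K_+$ fail to be roots, and $\Vol_3(\gamma_1,\gamma_2,\alpha)=1$ by hypothesis. Theorem~\ref{root_diffs} then applies and tells us that $\gamma_1,\gamma_2,\alpha$ are precisely the simple roots in $R^K$; in particular $\alpha$ is simple, which is the desired alternative. Unwinding the earlier normalization, the original $\alpha$ is $\pm\alpha_k$ for some $k$, as required. There is no real obstacle here beyond the bookkeeping of signs: the content of the corollary is entirely absorbed into Theorem~\ref{root_diffs}, which is cited as a black box.
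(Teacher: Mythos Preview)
Your core argument is exactly how the paper intends the corollary to be read: it is stated without proof as an immediate consequence of Theorem~\ref{root_diffs}, and your derivation---observe that $\gamma_1-\gamma_2\notin R^K$ because two distinct simple roots differ by a vector with a $+1$ and a $-1$ coordinate, then apply Theorem~\ref{root_diffs} contrapositively---is the intended one.

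The sign-normalization paragraph, however, is not correct. You claim the conclusion is invariant under $\alpha\mapsto-\alpha$ and justify this by the identity $(-\alpha)-(-\gamma_i)=-(\alpha-\gamma_i)$; but that identity silently negates $\gamma_i$ as well, which you are not allowed to do since $-\gamma_i$ is no longer simple. The honest transformed conclusion would read ``$-\alpha-\gamma_i\in R^K$'', i.e.\ ``$\alpha+\gamma_i\in R^K$'', and that is a different statement. In fact the corollary is \emph{false} for negative $\alpha$ as literally written: in type $A_3$ take $\gamma_1=\alpha_1$, $\gamma_2=\alpha_2$, and $\alpha=-(\alpha_1+\alpha_2+\alpha_3)$; then $\Vol_3(\gamma_1,\gamma_2,\alpha)=1$, $\alpha$ is not $\pm$ a simple root, yet neither $2\alpha_1+\alpha_2+\alpha_3$ nor $\alpha_1+2\alpha_2+\alpha_3$ is a root of $A_3$, so neither $\alpha-\gamma_1$ nor $\alpha-\gamma_2$ lies in $R^K$.

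The upshot is that the statement should be read with $\alpha\in R^K_+$ (which is how the paper actually uses it, cf.\ Example~\ref{bluepath}), and then your normalization step is simply unnecessary. Drop that paragraph, assume $\alpha\in R^K_+$ from the outset, and the remainder of your proof is clean and matches the paper.
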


\begin{examp}\label{bluepath}
\begin{figure}[h]
\includegraphics[width=0.4\textwidth]{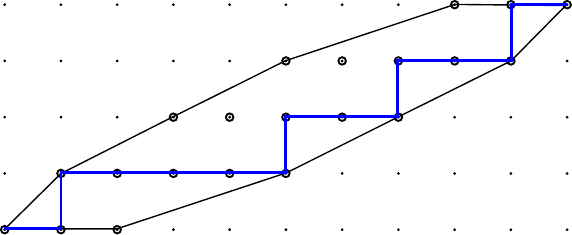}
\caption{A path of roots in the $(*,*,1)$-plane.\label{fig:path}}
\end{figure}
Repeatedly applying Corollary \ref{convex_diff2} with $\gamma_1=(1,0,0)$, $\gamma_2=(0,1,0)$, and starting with $\al =(10,4,1)$ yields (for example) the blue path of roots displayed in Figure \ref{fig:path}.
\end{examp}

\begin{remar}
A short proof for the fact that all lattice points in the convex hull of the roots in the $(*,*,1)$-plane are roots is still unknown.
\end{remar}

\begin{lemma}[cf.\ {\cite[Lem.\ 3.12 (2)]{p-CH09c}}]\label{r111}
Let $(\Ac,V,\RS)$ be an irreducible crystallographic arrangement of rank three and $K$ a chamber.
Then $\alpha_1+\alpha_2+\alpha_3\in R^K$.
\end{lemma}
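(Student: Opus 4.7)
The plan is to use the irreducibility of $\Ac$ (via Lemma~\ref{adj2}) together with Proposition~\ref{plane001} to exhibit an auxiliary root of a useful form, and then invoke Lemma~\ref{lemcon} to force $(1,1,1)$ into $R^K$. First I would apply Lemma~\ref{adj2}: since $\Ac$ is irreducible, at most one of the three 2-localizations $\langle \alpha_i,\alpha_j\rangle$, $1\le i<j\le 3$, can contain only the two simple positive roots. After relabelling the simples I may therefore assume $\alpha_1+\alpha_3,\alpha_2+\alpha_3\in R^K$ (the remaining case, where instead $\alpha_1+\alpha_2$ and $\alpha_1+\alpha_3$ lie in $R^K$, is handled by an analogous argument using a different labelling).

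Next I would apply Proposition~\ref{plane001} to the 2-localization $\langle\alpha_1,\alpha_3\rangle$, which by the previous step has at least three positive roots. The resulting boundary sequences $\gamma_\ell,\delta_\ell\in R^K_+$ lie in the $(\ast,1,\ast)$-plane, satisfy $\gamma_{\ell+1}-\gamma_\ell\in\NN_0^3$, and the underlying $\cEs$-sequence $\beta_1=\alpha_3,\beta_2,\ldots,\beta_{n-1},\beta_n=\alpha_1$ has intermediate terms with both coordinates $\ge 1$. Combining this with Proposition~\ref{plane001}(2) (no two consecutive $d_k$'s vanish) and the periodicity $d_k=d_{k+n}$ of the auxiliary cycle, one locates a root of the form $(a,1,c)\in R^K_+$ with $a,c\ge 1$.

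Starting from such an $(a,1,c)$ I would then iterate Corollary~\ref{convex_diff2} with $\gamma_1=\alpha_1$, $\gamma_2=\alpha_3$ (noting $\Vol_3(\alpha_1,\alpha_3,(a,1,c))=1$); each step decrements either the $\alpha_1$- or the $\alpha_3$-coordinate by one while keeping the $\alpha_2$-coordinate equal to $1$. Either the descent reaches $(1,1,1)$ and we are done, or it arrives at a root of the form $(a',1,1)\in R^K_+$ with $a'\ge 2$. In the latter case I would finish by applying Lemma~\ref{lemcon} with $\alpha=\alpha_2+\alpha_3=(0,1,1)$, $\beta=\alpha_1=(1,0,0)$, and $k=a'\ge 2$: the hypotheses $\dim\langle\alpha,\beta\rangle_\QQ=2$, $\Vol_2(\alpha,\beta)=1$, and $(-\NN\alpha+\ZZ\beta)\cap\NN_0^3=\emptyset$ are immediate, and by construction $\alpha+k\beta=(a',1,1)\in R^K$; the lemma then yields $\alpha+\ell\beta=(\ell,1,1)\in R^K$ for $\ell=0,\ldots,a'$, and $\ell=1$ gives $\alpha_1+\alpha_2+\alpha_3\in R^K$.

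The main obstacle is controlling the descent via Corollary~\ref{convex_diff2}: at each step it only guarantees that \emph{one} of the two neighbours $(a-1,1,c)$ or $(a,1,c-1)$ lies in $R^K$, so a naive reduction could terminate prematurely at a root of the form $(a',1,0)$ or $(0,1,c')$. To circumvent this I would run the descent simultaneously on the $\gamma$- and $\delta$-sequences from Proposition~\ref{plane001}, exploiting the symmetry of the auxiliary cycle so that at least one of the two paths avoids the boundary and reaches a root of the desired shape $(a',1,1)$ with $a'\ge 1$.
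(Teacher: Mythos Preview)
Your strategy can be made to work, but as written there are two gaps, and the route is heavier than the paper's. First, the claimed periodicity $d_k=d_{k+n}$ of the auxiliary cycle is nowhere established and is generally false (only the quiddity cycle has period $n$; the $\gamma$- and $\delta$-boundaries in Figure~\ref{fig:local} are visibly different). You do not actually need it: a short case analysis on $d_1,d_2,d_3$ using only Proposition~\ref{plane001}(1),(2) and $n\ge 3$ already produces some $\gamma_\ell$ with both its $\alpha_1$- and $\alpha_3$-coordinates positive. Second, the ``main obstacle'' you flag is illusory. The descent via Corollary~\ref{convex_diff2} is a monotone lattice path from $(a,c)$ (both $\ge 1$) toward $(0,0)$ by unit steps; the \emph{last} point on this path with both coordinates $\ge 1$ necessarily has $a'=1$ or $c'=1$, so the path always passes through some $(a',1,1)$ with $a'\ge 1$ or some $(1,1,c')$ with $c'\ge 1$, after which Lemma~\ref{lemcon} (or the trivial case $a'=1$, $c'=1$) finishes. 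Your simultaneous $\gamma/\delta$ workaround is unnecessary and, as stated, too vague to evaluate.

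The paper's argument is much more direct and, importantly, avoids Corollary~\ref{convex_diff2} and hence Theorem~\ref{root_diffs}, whose proof is deliberately omitted because it is not needed for the bounds. After arranging $\alpha_2+\alpha_3\in R^K$ via Lemma~\ref{adj2}, the paper passes to the adjacent chamber $K'=\rho_1(K)$ and distinguishes a few cases according to whether $c_{1,3}^K$ and $c_{2,3}^{K'}$ vanish; in each case a single reflection by $\sigma_1$ exhibits a root $m\alpha_1+\alpha_2+\alpha_3\in R^K$ with $m\ge 1$, and one application of Lemma~\ref{lemcon} with $\alpha=\alpha_2+\alpha_3$, $\beta=\alpha_1$ then yields $(1,1,1)\in R^K$. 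No appeal to Proposition~\ref{plane001} or Corollary~\ref{convex_diff2} is made.
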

\begin{proof}
Up to permuting coordinates, without loss of generality $|R^K_+\cap \langle \alpha_2, \alpha_3\rangle|>2$ by Lemma \ref{adj2}, hence $\al _2+\al _3\in R^K$.
Assume first that $|R^K_+\cap \langle \alpha_1, \alpha_3\rangle|=2$ which implies $c_{1,3}^K=c_{1,3}^{\rho_1(K)}=0$.
Then $c_{2,3}^{\rho_1(K)}<0$ and thus $\al _2+\al _3\in R^{\rfl _1(K)}_+$ and $\s^{\rfl _1(K)}_1(\al _2+\al _3)=-c^K_{12}\al _1+\al _2+\al _3\in
R^K$. Therefore $\alpha_1+\alpha_2+\alpha_3\in R^K$ holds by Lemma \ref{lemcon}
for $\al =\al _2+\al _3$ and $\beta =\al _1$.

Assume now that $c^K_{1,3}\not=0$. By symmetry and the previous paragraph we
may also assume that $c^K_{1,2},c^K_{2,3}\not=0$. Let $K'=\rfl _1(K)$.
If $c^K_{2,3}=0$ then $\al _1+\al _2+\al _3\in R^b$ by the previous
paragraph. Then
\[ R^K\ni \s^{K'}_1(\al _1+\al _2+\al _3)
    =(-c^K_{1,2}-c^K_{1,3}-1)\al _1+\al _2+\al _3,
\]
and the coefficient of $\al _1$ is positive.
Further, $\al _2+\al _3\in R^K$, and hence $\alpha_1+\alpha_2+\alpha_3\in R^K$ holds in this case by
Lemma \ref{lemcon}. Finally, if $c^{K'}_{2,3}\not=0$, then $\al _2+\al_3\in R^{K'}_+$, and hence $(-c^K_{1,2}-c^K_{1,3})\al _1+\al _2+\al _3\in R^K$.
Since $-c^K_{1,2}-c^K_{1,3}>0$, $\alpha_1+\alpha_2+\alpha_3\in R^K$ follows again from Lemma \ref{lemcon}.
\end{proof}

\section{Bounds}\label{sec:bounds}

Our first goal is to reproduce the bound for the Cartan entries (Theorem \ref{bound7}) as in \cite{p-CH09c}.
For this, we need the following technical result. We give a proof which is slightly different from the one in \cite{p-CH09c}, in particular, we leave no details to the reader:

\begin{theor}[cf.\ {\cite[Lem.\ 3.12 (5)]{p-CH09c}}]\label{k21}
Let $(\Ac,V,\RS)$ be a crystallographic arrangement of rank three, $K$ a chamber, and $|R^K_+\cap \langle \alpha_1, \alpha_2\rangle|\ge 5$.
Then
\[ k_0 := \min\{ k\in\NN_0 \mid k\alpha_1+2\alpha_2+\alpha_3\in R^K\} \in \{0,\ldots,4\} \]
and $k_0\le 2$ if $c^K_{1,3} =0$.
\end{theor}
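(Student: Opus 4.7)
The plan is a case analysis on the first entry $d_1 = -c^K_{2,3}$ of the auxiliary cycle of the localization at $\langle \alpha_2, \alpha_1\rangle$, refined by the second term $\beta_2 = (1, b_2, 0)$ of the corresponding $\cEs$-sequence; the first coordinate of $\beta_2$ is forced to be $1$ since consecutive entries of an $\cEs$-sequence have $\det = \pm 1$ and $\beta_1 = \alpha_2$. The three principal ingredients are Lemma~\ref{r111} (providing the anchor root $\alpha_1+\alpha_2+\alpha_3 \in R^K$), Proposition~\ref{plane001} (placing the vertices $\gamma_\ell, \delta_\ell$ of the convex polygon in the $(*,*,1)$-plane as positive roots, with $\gamma_\ell-\gamma_{\ell-1}=d_\ell\beta_\ell\in\NN_0^3$), and Lemma~\ref{lemcon} (filling in every lattice point on each boundary segment of that polygon).

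First I would dispose of the cases in which the first few boundary segments already hit the level $y=2$. If $d_1 \geq 2$, Lemma~\ref{lemcon} with $\alpha = \alpha_3$, $\beta = \alpha_2$, $k = d_1$ gives $(0, 2, 1) \in R^K$, so $k_0 = 0$. If $d_1 = 1$, $b_2 = 1$, and $d_2 \geq 1$, Lemma~\ref{lemcon} applied to the segment from $\gamma_1 = (0, 1, 1)$ to $\gamma_2 = (d_2, 1+d_2, 1)$ deposits $(1, 2, 1) \in R^K$, so $k_0 \leq 1$. If $d_1 = 0$ (so $d_2 \geq 1$ by Proposition~\ref{plane001}(2)) and either $b_2 = 2$ with $d_2 \geq 1$, or $b_2 = 1$ with $d_2 \geq 2$, then Lemma~\ref{lemcon} on the segment from $\alpha_3$ to $\gamma_2 = (d_2, d_2 b_2, 1)$ deposits $(1, 2, 1)$ or $(2, 2, 1)$, so $k_0 \leq 2$. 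These configurations already satisfy both bounds of the theorem.

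The main obstacle is the remaining sub-cases in which the first boundary segment either skips $y = 2$ (namely $d_1 = 1$, $b_2 \geq 2$, or $d_1 = 0$, $b_2 \geq 3$) or fails to reach it ($d_1 = 0$, $b_2 = d_2 = 1$, where $\gamma_2 = (1,1,1)$ stops at $y = 1$). Here my plan is to proceed along the polygon further until reaching a vertex $\gamma_m$ with $y$-coordinate $\geq 2$ (guaranteed by iterating Proposition~\ref{plane001}) and then descend from $\gamma_m$ toward $\alpha_3$ by repeated application of Corollary~\ref{convex_diff2}, each step subtracting $\alpha_1$ or $\alpha_2$. Since the $y$-coordinate drops by at most $1$ per step, the descent must produce a root at $y = 2$; bounding its $x$-coordinate by $4$ will rely on the constraint $n \geq 5$ together with the combinatorial structure of the $\cEs$-sequence and Remark~\ref{quid121} on forbidden quiddity subsequences.

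For the refined bound $k_0 \leq 2$ under $c^K_{1,3} = 0$: this hypothesis means $d_{2n} = 0$, so by Proposition~\ref{plane001}(2) we must have $d_1 \neq 0$, eliminating the entire $d_1 = 0$ branch. Moreover $\delta_1 = \delta_0 = \alpha_3$, so the $\delta$-side of the polygon begins degenerately along $\beta_{n-1}$ only; the resulting rigidity restricts the possible descent paths and tightens the count from $4$ to $2$. The hardest step is precisely this descent count in the sub-case $d_1 = 1$, $b_2 \geq 2$, where one must translate the abstract convexity furnished by Corollary~\ref{convex_diff2} into an explicit bound on the $x$-coordinate at the first crossing of the line $y = 2$.
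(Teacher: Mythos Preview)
Your setup matches the paper's (your $b_2$ is exactly the first quiddity entry $c_1$, since $\beta_2=(1,c_1,0)$), and the easy cases you dispatch are fine. The gap is in your treatment of the ``hard'' cases, where you defer to a descent via Corollary~\ref{convex_diff2}. That descent does give you a root on the line $y=2$, but it gives \emph{no control} on its $x$-coordinate: Corollary~\ref{convex_diff2} only guarantees that \emph{one} of $\alpha-\alpha_1$, $\alpha-\alpha_2$ is a root, so the path may repeatedly subtract $\alpha_2$ first and hit $y=2$ with $x$ as large as the $x$-coordinate of your starting vertex $\gamma_m$---and that is not bounded (for instance $\gamma_2=(d_2,c_1 d_2+1,1)$ in the case $d_1=1$, $c_1\ge 2$, $d_2>0$). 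Your remark that the bound ``will rely on $n\ge 5$ and the $\cEs$-structure'' is where the proof would have to happen, and nothing in the tools you cite pins it down. Note also that the paper explicitly avoids Corollary~\ref{convex_diff2}, since it rests on Theorem~\ref{root_diffs} whose proof is omitted as ``most technical''.

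The paper's route is different and avoids this problem entirely. The key device you are missing is the reduction
\[
(k,c,1)\in R^K,\ k\le 1,\ c>1 \ \Longrightarrow\ k_0\le 1,
\]
obtained by applying Lemma~\ref{lemcon} along the vertical segment from $(0,0,1)$ or $(1,1,1)$ to $(k,c,1)$. With this in hand, the cases you call ``hard'' with $c_1\ge 2$ become easy: one applies Lemma~\ref{lemcon} along the \emph{first} boundary segment past $\gamma_0$ or $\gamma_1$ to produce a root $(1,c,1)$ with $c\ge 2$, independently of $d_2$, and the reduction gives $k_0\le 1$. The genuine work is concentrated in the branch $c_1=1$, $d_1=0$, $d_2=1$, where the paper pushes the case analysis out to $\gamma_3$ and $\gamma_4$, each time using Lemma~\ref{lemcon} along a carefully chosen segment (e.g.\ with direction $(c_2,c_2-1,0)$ or $(1,1,0)$) to land a root at $y=2$ with $x\le 4$; the constraint $n\ge 5$ enters only to exclude the quiddity pattern $(1,2,1)$ in the terminal subcase $c_2=2$, $c_3=1$. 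Replace your descent plan with this explicit segment-walking via Lemma~\ref{lemcon} and the argument closes; as a bonus, all $d_1>0$ subcases then yield $k_0\le 2$, which is exactly the refined bound under $c^K_{1,3}=0$.
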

\begin{proof}
Let $(c_1,\ldots,c_n)$ be the quiddity cycle, $(d_1,\ldots,d_{2n})$ the auxiliary cycle of $\langle\alpha_2,\alpha_1\rangle$, and $\gamma_0,\ldots,\gamma_n$ as in Proposition \ref{plane001}.
Then
\[
\gamma_0=(0,0,1), \quad
\gamma_1=(0,d_1,1), \quad
\gamma_2=(d_2,c_1 d_2+d_1,1), \]
\[
\gamma_3=(c_2 d_3+d_2, c_1c_2d_3 + c_1d_2 + d_1 - d_3,1), \]
\[
\gamma_4=(c_2c_3d_4 + c_2d_3 + d_2 - d_4, c_1c_2c_3d_4 + c_1c_2d_3 + c_1d_2 - c_1d_4 - c_3d_4 + d_1 - d_3,1),
\]
are positive roots.
Moreover, $(1,1,1)\in R^K$ by Lemma \ref{r111}.

Remark first that if $(0,c,1)\in R^K$ for $c>1$, then $(0,2,1)\in R^K$ by Lem.\ \ref{lemcon} since $\gamma_0=(0,0,1)\in R^K$.
Similarly, if $(1,c,1)\in R^K$ for $c>1$, then $(1,2,1)\in R^K$ by Lem.\ \ref{lemcon} since $(1,1,1)\in R^K$. Hence
\begin{equation}\label{0c11c1}
(k,c,1)\in R^K, \: k\le 1, \: c>1 \quad \Longrightarrow \quad k_0\le 1.
\end{equation}
Now we consider all possible values for the cycles.\\
If $d_1\ge 2$, then $k_0\le 1$ by (\ref{0c11c1}) since $\gamma_1\in R^K$.
Hence assume $d_1\le 1$.

We first consider the case $c_1>1$.\\
If $d_1=0$, then $d_2>0$ (Prop.\ \ref{plane001}, (\ref{consec_d})). Applying Lem.\ \ref{lemcon} to $\gamma_0,(d_2,c_1 d_2,1)=\gamma_2\in R^K$ gives $(1,c_1,1)\in R^K$, thus $k_0\le 1$ by (\ref{0c11c1}).\\
If $d_1=1$, $d_2>0$, then $\gamma_2 = d_2 (1, c_1,0) + \gamma_1$, thus $(1,c_1+1,1)\in R^K$ and $k_0\le 1$ by (\ref{0c11c1}).\\
If $d_1=1$, $d_2=0$, then $d_3>0$, $\gamma_3 = d_3(c_2, c_1c_2 - 1,0) + \gamma_1$ thus $(c_2,c_1 c_2,1)\in R^K$ which implies $(1,c_1,1)\in R^K$ and $k_0\le 1$ by (\ref{0c11c1}).

Now consider the case $c_1=1$. This implies $c_2>1$ since $|R^K_+\cap \langle \alpha_1, \alpha_2\rangle|\ge 5$.\\
If $d_1=1$, $d_2>0$, then $\gamma_2 = d_2 (1,1,0) + \gamma_1$, thus $(1,2,1)\in R^K$ and $k_0\le 1$.\\
If $d_1=1$, $d_2=0$, then $d_3>0$, $\gamma_3 = d_3(c_2, c_2 - 1,0) + \gamma_1$ thus $(c_2, c_2,1)\in R^K$ which implies $(2,2,1)\in R^K$ and $k_0\le 2$.\\
The last remaining case is $d_1=0$, and thus $d_2>0$. Notice that $d_1=0$ also implies $(1,0,1)\in R^K$ since $\delta_1 = (d_{2n},0,1)\in R^K$ and $d_{2n}>0$.
Recall also that we are still in the case $c_1=1$ and $c_2>1$.\\
If $d_2\ge 2$, then $\gamma_2 = (d_2,d_2,1)\in R^K$ and thus $(2,2,1)\in R^K$ and $k_0\le 2$.
Hence we may assume $d_2=1$.\\
If $d_3>0$ then $\gamma_3 = (c_2 d_3+1, c_2d_3 + 1 - d_3,1) = d_3(c_2,c_2-1,0)+(1,1,1)$, thus $(c_2+1,c_2,1)\in R^K$.
But $(c_2+1,c_2,1) = c_2(1,1,0)+(1,0,1)$ which implies $(3,2,1)\in R^K$ and $k_0\le 3$.\\
Finally, assume that $d_3=0$, $d_4>0$.
Then $\gamma_4=d_4(c_2c_3 - 1, c_2c_3 - 1 - c_3,0)+(1,1,1)$ implies
$(c_2c_3,c_2c_3-c_3,1) = c_3(c_2,c_2-1,0)+(0,0,1)\in R^K$.\\
If $c_2>2$, then $(c_2,c_2-1,1)=(c_2-1)(1,1,0)+(1,0,1)\in R^K$ and thus $(3,2,1)\in R^K$ and $k_0\le 3$.\\
If $c_2=2$, then $(2c_3,c_3,1)\in R^K$. If $c_3>1$ then this implies $(4,2,1)\in R^K$ and $k_0\le 4$.
The case $c_3=1$ is excluded since it implies $|R^K_+\cap \langle \alpha_1, \alpha_2\rangle|=4$: by Remark \ref{quid121}, the only quiddity cycles containing $(1,2,1)$ are $(1,2,1,2)$ and $(2,1,2,1)$. 

If $c^K_{1,3} =0$ then $d_{2n}=c^K_{1,3}=0$ implies $d_1>0$ by Prop.\ \ref{plane001}, (\ref{consec_d}). All above cases with positive $d_1$ imply $k_0\le 2$.
\end{proof}

This allows to compute a global bound for Cartan entries in crystallographic arrangements of rank greater than two:

\begin{theor}[cf.\ {\cite[Thm.\ 3.13]{p-CH09c}}]\label{bound7}
Let $(\Ac,V,\RS)$ be a crystallographic arrangement of rank greater or equal to three.
Then all entries of the Cartan matrices are greater or equal to $-7$.
\end{theor}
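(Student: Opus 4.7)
The plan is to reduce to the irreducible rank-three case and, assuming some Cartan entry satisfies $-c^K_{i,j} \ge 8$, derive a contradiction from Theorem~\ref{k21}. By Remark~\ref{locroots}, the localization of a crystallographic arrangement at any subspace is again crystallographic, and the entry $c^K_{i,j}$ is preserved under localization to $\langle \alpha_i,\alpha_j,\alpha_k\rangle$ for any third simple root $\alpha_k$. For an irreducible arrangement of rank at least $3$, one can choose $\alpha_k$ so that the Cartan interaction graph restricted to $\{\alpha_i,\alpha_j,\alpha_k\}$ is connected, in which case the rank-three localization is itself irreducible. Hence it suffices to treat the irreducible rank-three case.

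Assume for contradiction that $c_1 := -c^K_{2,1} \ge 8$ (relabeling simple roots if necessary). Since $\alpha_1+\ell\alpha_2\in R^K$ for $\ell = 0,\ldots,c_1$, we have $n := |R^K_+ \cap \langle \alpha_1,\alpha_2\rangle| \ge 9$, so Theorem~\ref{k21} applies and delivers a root $k_0\alpha_1+2\alpha_2+\alpha_3\in R^K$ with $k_0\le 4$. Inspecting the case analysis inside the proof of Theorem~\ref{k21} shows that $k_0 \le 4$ is sharpened to $k_0 \le 1$ under the extra hypothesis $c_1\ge 2$, so either $2\alpha_2+\alpha_3 \in R^K$ or $\alpha_1+2\alpha_2+\alpha_3 \in R^K$. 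The same reasoning applies at each of the chambers $K_1=K,K_2,\ldots,K_n$ of the rank-two localization: at each $K_\ell$ the hypothesis of Theorem~\ref{k21} still holds (since $n$ does not change), so we obtain a root $k_0^{(\ell)}\alpha_1^{K_\ell}+2\alpha_2^{K_\ell}+\alpha_3^{K_\ell}\in R^{K_\ell}$ with $k_0^{(\ell)}\le 4$, and with $k_0^{(\ell)}\le 1$ whenever the local quiddity entry is $\ge 2$. Transporting back to $K$-coordinates via Proposition~\ref{plane001}, these roots sit as interior points of the convex polygon in the $(*,*,1)$-plane at $K$ whose boundary is given by the $\gamma_\ell$ and $\delta_\ell$.

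Combining these interior roots with the boundary roots of Proposition~\ref{plane001} via repeated application of Lemma~\ref{lemcon} and Corollary~\ref{convex_diff2}, one fills a large convex region of the $(*,*,1)$-plane with elements of $R^K$. The assumption $c_1\ge 8$ forces (via the formula $\gamma_2=(d_2,c_1d_2+d_1,1)$ of Proposition~\ref{plane001}, in combination with Proposition~\ref{plane001}\,(\ref{consec_d}) guaranteeing $d_2\ge 1$ or $d_1\ge 1$) a polygon whose vertical extent is at least $c_1$. A counting argument then shows that, in order to populate the boundary and the interior simultaneously, one must produce a line of roots along which $\Vol_2$ takes a value $\ge 2$, contradicting the $\Vol_2=1$ hypothesis that would be needed for Lemma~\ref{lemcon} to propagate consistently; equivalently, one produces a pair of putative roots whose lattice they span is non-saturated in $\ZZ^3$, which is forbidden by the crystallographic integrality condition~(\ref{equ:crys}).

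The main obstacle is the final step: converting the geometric $(*,*,1)$-plane picture into an explicit arithmetic contradiction. In practice this is a finite but delicate case analysis, indexed by $k_0\in\{0,1\}$ coming from Theorem~\ref{k21}, the initial auxiliary-cycle entries $d_1,d_2,d_3,d_4$ (restricted further by Proposition~\ref{plane001}\,(\ref{consec_d})), and the rank-two quiddity structure around the large entry $c_1$, which is heavily constrained by Remark~\ref{quid121} (in particular by the restrictions on how the values $1$ and $2$ may be juxtaposed in a quiddity cycle). Each case either forces a rank-two localization already forbidden by the $\cEs$-sequence structure, or else produces two roots whose $\Vol_2$ is $\ge 2$ despite sharing a line of $R^K$-roots, contradicting Lemma~\ref{lemcon}.
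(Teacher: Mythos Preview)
Your proposal has a genuine gap: the final step is only sketched, not carried out, and the sketched route is not obviously sound. Saying that ``a counting argument then shows \dots'' and that ``each case either forces a rank-two localization already forbidden \dots\ or else produces two roots whose $\Vol_2$ is $\ge 2$ despite sharing a line of $R^K$-roots, contradicting Lemma~\ref{lemcon}'' is not a proof. Lemma~\ref{lemcon} has $\Vol_2=1$ as a \emph{hypothesis}; finding two roots with $\Vol_2\ge 2$ does not contradict it, it merely prevents you from applying it. Nor does a large convex region in the $(*,*,1)$-plane by itself yield any contradiction with the crystallographic axiom~(\ref{equ:crys}). You also never assume that $-c^K_{2,1}$ is \emph{maximal} among all Cartan entries, so it is unclear what the contradiction is supposed to be against.

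The paper's argument is short and avoids all of this. Assume $-c^K_{1,2}$ is the \emph{largest} Cartan entry over all chambers and that it is $\ge 8$. Theorem~\ref{k21} gives a root $\gamma=k_0\alpha_1+2\alpha_2+\alpha_3\in R^K_+$ with $k_0\le 4$ (and $k_0\le 2$ if $c^K_{1,3}=0$). Reflect once: in $K'=\rho_1(K)$ the root $\gamma$ becomes $\gamma'=(-k_0-2c^K_{1,2}-c^K_{1,3})\alpha_1+2\alpha_2+\alpha_3\in R^{K'}_+$. Apply Theorem~\ref{k21} a second time, now at $K'$, to get $\alpha=k'_0\alpha_1+2\alpha_2+\alpha_3\in R^{K'}_+$ with $k'_0\le 4$. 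Then $\gamma'-\alpha$ is a nonnegative multiple of $\alpha_1$, and Lemma~\ref{lemcon} (applied to $\alpha$ and $\beta=\alpha_1$) produces a chamber $K''$ with
\[
-c^{K''}_{i,j}\ \ge\ -k_0-2c^K_{1,2}-c^K_{1,3}-k'_0.
\]
Using $k_0\le 2$ when $c^K_{1,3}=0$ and $k_0\le 4$ otherwise, the right-hand side strictly exceeds $-c^K_{1,2}$ in either case, contradicting maximality. The whole machinery of transporting roots from all $K_\ell$ back to $K$, filling convex regions, and a residual case analysis is unnecessary: one reflection, two invocations of Theorem~\ref{k21}, and one application of Lemma~\ref{lemcon} suffice.
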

\begin{proof}
Assume that $K$ is a chamber with largest Cartan entry $-c_{1,2}^K\ge 8$.
Then Remark \ref{quid121} implies that $|R^K_+\cap \langle \alpha_1, \alpha_2\rangle|\ge 5$.
By Theorem \ref{k21} there exists $k_0\in \{0,1,2,3,4\}$ such that
$\gamma:=k_0\al _1+2\al _2+\al _3\in R^K_+$.
In the adjacent chamber $K'=\rho_1(K)$, we have
\[ \gamma':=\sigma_1^K(\alpha) = (-k_0 - 2 c_{1,2}^K - c_{1,3}^K) \alpha_1 + 2\alpha_2 + \alpha_3 \in R^{K'}_+. \]
Again by Theorem \ref{k21} there exists $k'_0\in \{0,1,2,3,4\}$ such that
$\alpha:=k'_0\al _1+2\al _2+\al _3\in R^{K'}_+$.
Now applying Lemma \ref{lemcon} to $\alpha$ and
$\gamma' = \alpha + (-k_0 - 2 c_{1,2}^K - c_{1,3}^K - k'_0) \alpha_1$
yields a chamber $K''$ with $1\le i,j \le 3$ and
\[ -c_{i,j}^{K''} \ge -k_0 - 2 c_{1,2}^K - c_{1,3}^K - k'_0. \]
By Theorem \ref{k21},
\[ k_0 \le \begin{cases}
2 & \text{if } -c_{1,3}^K = 0, \\
4 & \text{if } -c_{1,3}^K > 0,
\end{cases} \]
thus
\[ -c_{i,j}^{K''}  \ge \begin{cases}
-c_{1,2}^K + 2> -c_{1,2}^K & \text{if } -c_{1,3}^K=0, \\
-c_{1,2}^K -c_{1,3}^K > -c_{1,2}^K & \text{if } -c_{1,3}^K>0.
\end{cases} \]
This is a contradiction to the assumption that $-c_{1,2}^K$ is the largest Cartan entry.
\end{proof}
\begin{remar}
The classification of crystallographic arrangements shows that in fact, entries of the Cartan matrices are always greater or equal to $-6$.
\end{remar}

Notice that there are infinitely many non-equivalent crystallographic arrangements of rank two with Cartan entries greater or equal to $-7$.
However, the number of non-equivalent localizations of rank two in rank three is finite (Corollary \ref{finr2}). We first prove:

\begin{theor}\label{b128}
Any localization of rank two of an irreducible crystallographic arrangement of rank three has at most $128$ positive roots.
\end{theor}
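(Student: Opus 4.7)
The plan is to argue by contradiction: assume $n := |R^K_+ \cap \langle\alpha_1,\alpha_2\rangle| \ge 129$ and produce a chamber with a Cartan entry $\le -8$, contradicting Theorem \ref{bound7}. The approach combines three ingredients established so far: the vertex sequence $\gamma_0,\ldots,\gamma_n$ from Proposition \ref{plane001}, the small roots $k_0\alpha_1+2\alpha_2+\alpha_3$ from Theorem \ref{k21}, and the convexity produced by Lemma \ref{lemcon}.

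Under the assumption $n \ge 129$, Proposition \ref{plane001}(3) guarantees at least $65$ distinct vertices $\gamma_\ell$ in the $(*,*,1)$-plane. Their consecutive differences are $\gamma_{\ell+1}-\gamma_\ell = d_{\ell+1}\beta_{\ell+1}$ with $d_{\ell+1}\in\{0,1,\ldots,7\}$ by Theorem \ref{bound7}, with no two consecutive zero entries by Proposition \ref{plane001}(2); similarly the quiddity cycle $(c_1,\ldots,c_n)$ of the localization has $c_i\le 7$. On the other hand, since $n\ge 5$, Theorem \ref{k21} in $K$ supplies a root $\eta := k_0\alpha_1+2\alpha_2+\alpha_3 \in R^K_+$ with $k_0\in\{0,\ldots,4\}$; applying it in the neighbouring chambers $\rho_i(K)$ and invoking the symmetry $\alpha_1\leftrightarrow\alpha_2$ yields further roots in the $(*,*,1)$-plane all of whose first two coordinates are bounded by absolute constants.

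The core of the argument is to apply Lemma \ref{lemcon} to a well-chosen pair $(\alpha,\beta)$ where $\alpha$ is one of these small roots and $\alpha+k\beta$ is either a $\gamma_\ell$ or a reflected image of one, with $k\ge 8$ and $\beta$ lying in the rank two localization $\langle\alpha_1,\alpha_2\rangle$. The conditions $\Vol_2(\alpha,\beta)=1$ and $(-\NN\alpha+\ZZ\beta)\cap\NN_0^r=\emptyset$ of Lemma \ref{lemcon} come essentially for free by comparing third coordinates ($\alpha$ has $\alpha_3$-coefficient $1$ while $\beta$ has $\alpha_3$-coefficient $0$). Once Lemma \ref{lemcon} applies with $k\ge 8$, it produces a chamber $K'$ and indices $i,j$ with $-c^{K'}_{i,j}\ge k\ge 8$, contradicting Theorem \ref{bound7}.

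The main obstacle is turning this qualitative picture into the precise bound $128$. One must quantify how the staircase of $\gamma_\ell$'s grows in the $(*,*,1)$-plane relative to the bounded small roots produced by Theorem \ref{k21}, and identify which simple root $\beta$ of the localization to use so that the difference $\gamma_\ell-\alpha$ is a genuine multiple $k\beta$ with $k\ge 8$. This requires a careful combinatorial analysis of the possible quiddity cycles $(c_i)$ and auxiliary cycles $(d_\ell)$ with entries bounded by $7$; summing the worst-case contributions $d_\ell\beta_\ell$ over initial segments of length at most $128$ against the Theorem \ref{k21} ceiling $k_0\le 4$, together with the $\cEs$-sequence structure from Theorem \ref{thmranktwo}, should yield exactly the bound $n\le 128$.
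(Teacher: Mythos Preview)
Your outline has a genuine gap, and it stems from missing the key idea. You correctly isolate the target---produce two roots in the $(*,*,1)$-plane whose difference is $k\beta$ with $\beta$ primitive in $\langle\alpha_1,\alpha_2\rangle$ and $k\ge 8$, then invoke Lemma \ref{lemcon} to contradict Theorem \ref{bound7}---but your mechanism for finding such a pair is wrong. You try to anchor at a \emph{small} root $\eta=k_0\alpha_1+2\alpha_2+\alpha_3$ from Theorem \ref{k21} and hope some $\gamma_\ell$ differs from it by a large multiple of a primitive vector. There is no reason this should happen: $\gamma_\ell-\eta$ can perfectly well be primitive (so $k=1$) no matter how large $\gamma_\ell$ is. Your final paragraph concedes this (``the main obstacle\ldots should yield exactly the bound'') without resolving it; the vague appeal to worst-case sums of $d_\ell\beta_\ell$ and the $\cEs$-structure does not force any divisibility on $\gamma_\ell-\eta$.

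The paper's proof bypasses Theorem \ref{k21} entirely and uses a pigeonhole argument on the $\gamma_\ell$'s \emph{against each other}. If $n>128$, Proposition \ref{plane001}\,(\ref{plane001vert}) gives more than $64$ distinct roots $(a,b,1)$ among the $\gamma_\ell$; since there are only $8\cdot 8=64$ residue classes for $(a\bmod 8,\,b\bmod 8)$, two of them, say $(a,b,1)\ne(a',b',1)$, agree modulo $8$. The monotonicity $\gamma_{\ell+1}-\gamma_\ell\in\NN_0^3$ lets one assume $a\ge a'$, $b\ge b'$. Now set $k=\gcd(a-a',\,b-b')\ge 8$ and $\beta=\bigl((a-a')/k,(b-b')/k,0\bigr)$; then $(a,b,1)=(a',b',1)+k\beta$, $\Vol_2((a',b',1),\beta)=1$ (the third coordinates are $1$ and $0$, and $\beta$ is primitive), and $(-\NN(a',b',1)+\ZZ\beta)\cap\NN_0^3=\emptyset$ since the third coordinate is negative. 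Lemma \ref{lemcon} then yields a Cartan entry $\le -8$, contradicting Theorem \ref{bound7}. This is where the number $128=2\cdot 8^2$ actually comes from; your route gives no mechanism that would single out $128$.
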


\begin{proof}
Without loss of generality, assume that $|R_+^K\cap \langle \alpha_1,\alpha_2\rangle|>128$ for some chamber $K$.
Then by Proposition \ref{plane001}, (\ref{plane001vert}) there are more than $64$ roots of the form $k\alpha_1+\ell\alpha_2+\alpha_3$, i.e.\ there exist roots
$(a,b,1),(a',b',1)\in R^K$, $(a,b,1)\ne(a',b',1)$ with
\[ a \equiv a' \:\:(\md 8), \quad b \equiv b' \:\:(\md 8), \]
and by Proposition \ref{plane001}, (\ref{plane001vert}) we may assume $a\ge a'$ and $b\ge b'$. But then
\[ (a,b,1) = (a',b',1) + k ((a-a')/k,(b-b')/k,0) \]
for some $k\ge 8$ and $(a-a')/k,(b-b')/k\in \ZZ$, $\gcd((a-a')/k,(b-b')/k)=1$.
By Lemma \ref{lemcon}, this implies the existence of a Cartan entry less or equal to $-8$, contradicting Theorem \ref{bound7}.
\end{proof}

\begin{corol}\label{finr2}
There is a finite set $\Ic$ of equivalence classes of crystallographic arrangements of rank two such that
every localization of rank two of an irreducible crystallographic arrangement of rank three belongs to one of the classes in $\Ic$.
\end{corol}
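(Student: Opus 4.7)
The plan is to combine the upper bound on the number of positive roots in a rank two localization (Theorem \ref{b128}) with the uniform bound on Cartan entries (Theorem \ref{bound7}), and then appeal to the fact that a crystallographic arrangement of rank two is determined up to equivalence by its quiddity cycle.

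First I would recall from Theorem \ref{thmranktwo} and Remark \ref{quid121} that an irreducible crystallographic arrangement of rank two is in bijection (up to equivalence, i.e.\ cyclic rotation and reflection of the sequence) with a quiddity cycle $(c_1,\ldots,c_n)$, where $n$ equals the number of positive roots. Explicitly, the underlying $\cEs$-sequence (and thus the set of roots) can be reconstructed from the cycle by the recursive construction of Definition \ref{R_seq}/Remark \ref{quid121}. Hence it suffices to bound both the length $n$ and the size of the entries of the quiddity cycle for any rank two localization of an irreducible crystallographic arrangement of rank three.

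Now let $\Bc$ be such a localization, arising as $\langle\alpha_i,\alpha_j\rangle\cap R^K$ inside an irreducible crystallographic arrangement $(\Ac,V,\RS)$ of rank three at a chamber $K$. By Theorem \ref{b128}, the number of positive roots of $\Bc$ is at most $128$, so $n\le 128$. The quiddity cycle $(c_1,\ldots,c_n)$ of $\Bc$, as defined in Definition \ref{quiaux}, consists of negatives of certain Cartan entries $-c_{i,j}^{K_\ell}$ or $-c_{j,i}^{K_\ell}$ of the ambient rank three arrangement $\Ac$ at various chambers $K_\ell$ adjacent to the localization. By Theorem \ref{bound7}, every Cartan entry of $\Ac$ is $\ge -7$, so $0\le c_\ell \le 7$ for every $\ell$.

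Consequently, the quiddity cycle of $\Bc$ lies in the finite set $\bigcup_{n=1}^{128}\{0,1,\ldots,7\}^n$, which has at most $\sum_{n=1}^{128}8^n$ elements. Since the equivalence class of a rank two crystallographic arrangement is determined by its quiddity cycle, there are only finitely many equivalence classes of rank two crystallographic arrangements that can occur as localizations, and we may take $\Ic$ to be this finite set. No real obstacle arises here: both nontrivial inputs (the length bound and the Cartan bound) have already been established, and the remaining step is just a counting argument using the combinatorial parametrization of rank two crystallographic arrangements by quiddity cycles.
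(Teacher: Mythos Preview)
Your argument is correct, but it takes a slightly more roundabout route than the paper. The paper's proof uses only Theorem~\ref{b128} together with the correspondence from Section~\ref{secranktwo} between rank two crystallographic arrangements and triangulations of a convex $n$-gon: once $n\le 128$ is known, finiteness is immediate because there are only finitely many triangulations of an $n$-gon for each $n$. Your proof instead invokes Theorem~\ref{bound7} as a second ingredient to bound the entries of the quiddity cycle, and then counts sequences in $\{0,\ldots,7\}^n$. This extra step is unnecessary---already the recursive description of quiddity cycles in Remark~\ref{quid121} (equivalently, the triangulation bijection) shows that there are only finitely many quiddity cycles of each fixed length, without any a priori bound on the entries. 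What your approach does buy is an explicit (if crude) numerical upper bound on $|\Ic|$, whereas the paper's argument appeals to the qualitative finiteness of triangulations. Either way, the substantive input is Theorem~\ref{b128}; the rest is bookkeeping.
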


\begin{proof}
By Theorem \ref{b128}, a localization of rank two of a crystallographic arrangement of rank three has at most $128$ positive roots.
Since a crystallographic arrangement $(\Ac,V,\RS)$ of rank two corresponds to a triangulation of a convex $|\RS|/2$-gon by non-intersecting diagonals
(see Section \ref{secranktwo}), there are only finitely many non-equivalent such arrangements with at most $128$ positive roots.
\end{proof}

\begin{corol}\label{boundvolm}
There exists a bound $m$, such that for any irreducible crystallographic arrangement of rank $r>2$ and $\alpha,\beta\in \RS$,
$$\Vol_2(\alpha,\beta)\le m.$$
\end{corol}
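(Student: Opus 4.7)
Assume without loss of generality that $\alpha,\beta$ are linearly independent (else $\Vol_2(\alpha,\beta)=0$), and consider the rank-two localization $\Ac_X$ at $X:=\ker\alpha\cap\ker\beta$, whose roots are $\RS\cap X'$ for $X':=\langle\alpha,\beta\rangle\subseteq V^*$. The plan has three ingredients: bound $|\Ac_X|$ by $128$ uniformly in $r$, bound the coefficients of $\alpha,\beta$ in a chosen pair of simple roots of $\Ac_X$, and choose those simple roots so that they belong to a basis $B^K$ of the ambient arrangement, thereby reducing $\Vol_2(\alpha,\beta)$ to an intrinsic rank-two computation.

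For the first ingredient, the case $r=3$ is Theorem \ref{b128}. For $r>3$, I would exhibit $\delta\in\RS\setminus X'$ such that the rank-three localization $\Ac_Y$ at $Y':=X'+\RR\delta$ is irreducible, and apply Theorem \ref{b128} to $\Ac_Y$. The degenerate case $|\Ac_X|=2$ (type $A_1\times A_1$) needs no reduction (see below). When $|\Ac_X|\ge 3$, irreducibility of $\Ac$ will produce a ``mixed'' root $\xi+c\delta\in\RS$ with $\xi\in\RS\cap X'$, $\delta\in\RS\setminus X'$ and $c\ne 0$: otherwise, for every such $\xi$ the reflection along $\xi$ would have to fix every $\delta\in\RS\setminus X'$ (its image differs from $\delta$ by a multiple of $\xi$, and a nonzero multiple would itself be a mixed root), giving a decomposition $V^*=X'\oplus W$ into the joint fixed subspace $W$ of these reflections with $\RS\subseteq X'\cup W$, contradicting irreducibility of $\Ac$. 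The mixed root $\xi+c\delta$ then excludes every reducible decomposition of $\Ac_Y$: the split $Y'=X'\oplus\RR\delta$ fails because $\xi+c\delta\notin X'\cup\RR\delta$; any other 2+1 decomposition whose 2-plane $V_1\ne X'$ forces $|\Ac_X|\le 2$ (since $X'$ would then meet each summand only in a line); and the Boolean 1+1+1 case is excluded because $|\Ac_Y|\ge|\Ac_X|+2\ge 5$. Hence $\Ac_Y$ is irreducible of rank three and Theorem \ref{b128} yields $|\Ac_X|\le 128$.

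For the remaining ingredients, Theorem \ref{thmranktwo} with $|\Ac_X|\le 128$ produces a universal constant $M$ (a Fibonacci-type bound from the length-bounded $\cEs$-sequence) such that the coefficients of every positive root of $\Ac_X$ in any chosen pair $\gamma_1,\gamma_2$ of simple roots of $\Ac_X$ are bounded in absolute value by $M$. Write $\alpha=a_1\gamma_1+a_2\gamma_2$, $\beta=b_1\gamma_1+b_2\gamma_2$ with $|a_i|,|b_i|\le M$. The standard adjacency property of simplicial arrangements---every chamber $K^*$ of $\Ac_X$ equals $\pi(K)$ for some chamber $K$ of $\Ac$ whose two walls containing $X$ project bijectively onto the walls of $K^*$, in the spirit of the gate property underlying Proposition \ref{typefunction}---produces a $K$ with $\{\gamma_1,\gamma_2\}\subseteq B^K$. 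In the coordinates $\coord^K$ the vectors $\gamma_1,\gamma_2$ become two distinct standard basis vectors, so the $r\times 2$ matrix $[\coord^K(\alpha)\mid\coord^K(\beta)]$ is supported on only two rows and
\[\Vol_2(\alpha,\beta)=|a_1b_2-a_2b_1|\le 2M^2.\]
In the degenerate case $|\Ac_X|=2$ the same adjacency argument makes $\alpha,\beta$ themselves part of some $B^K$, so $\Vol_2(\alpha,\beta)=1$. Setting $m:=2M^2$ proves the corollary.

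The main obstacle is the first step: ruling out every potentially reducible decomposition of $\Ac_Y$ by means of the mixed root together with the standing hypothesis $|\Ac_X|\ge 3$. The remaining ingredients---the $\cEs$-sequence coefficient bound from the rank-two classification, and the extension of simple roots of a localization to simple roots of a chamber of the ambient arrangement---are routine consequences of the machinery already in place.
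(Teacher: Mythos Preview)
Your proposal is correct and follows the paper's approach: embed $\langle\alpha,\beta\rangle$ in a rank-three localization, bound the rank-two piece via Theorem~\ref{b128} (equivalently Corollary~\ref{finr2}), and compute $\Vol_2$ in coordinates where the simple roots of the rank-two localization lie in some $B^K$. The paper is terser---it picks $K$ with $\alpha$ itself simple, so $\coord^K(\alpha)=\alpha_1$, $\coord^K(\beta)=a\alpha_1+b\alpha_2$ and $\Vol_2(\alpha,\beta)=|b|$ directly---and it glosses over the irreducibility of the intermediate rank-three localization that you take pains to establish.
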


\begin{proof}
Viewing $\alpha$ and $\beta$ as elements of the localization $\langle \alpha,\beta\rangle$, we may choose a chamber $K$ such that
$\coord^K(\alpha)=\alpha_i$, $\coord^K(\beta)=a\alpha_i+b\alpha_j$ for suitable $a,b\in \ZZ$, without loss of generality $i=1$, $j=2$.
Since $r>2$, the roots $\coord^K(\alpha),\coord^K(\beta)$ are roots in a localization $\langle \alpha_1,\alpha_2,\alpha_\ell\rangle$ of rank three, $\ell>2$.
Thus by Corollary \ref{finr2}, the localization $\langle \alpha,\beta\rangle$ is one of finitely many possible crystallographic arrangements of rank two up to equivalence, hence by Section \ref{secranktwo}, coordinates of roots in these crystallographic arrangements are bounded by some number $m\in \NN$.
This implies
$\Vol_2(\alpha,\beta)=|b|\le m$.
\end{proof}

\begin{remar}
In fact, the classification of crystallographic arrangements shows that the sharp bound in Corollary \ref{boundvolm} is $m=6$.
\end{remar}

\begin{theor}\label{mainthm}
Let $r>2$. Then there are only finitely many equivalence classes of irreducible crystallographic arrangements of rank $r$.
\end{theor}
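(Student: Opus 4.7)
The plan is to induct on $r\ge 3$, proving the stronger statement that for each such $r$ there is a constant $M_r$ bounding the coordinates (relative to the basis of simple roots at any chamber $K$) of every root in $R^K$. Since a crystallographic arrangement is determined up to equivalence by its root set $R^K\subseteq \ZZ^r$ modulo the $\GL(\ZZ^r)$-action, and there are only finitely many $\GL(\ZZ^r)$-orbits of subsets of any bounded box in $\ZZ^r$, such a coordinate bound immediately yields the finiteness statement.

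For the base case $r=3$, fix a chamber $K$ with simple roots $\alpha_1,\alpha_2,\alpha_3$ and decompose $R^K_+$ according to the third coordinate $c$ of $\gamma=a\alpha_1+b\alpha_2+c\alpha_3$. Roots with $c=0$ lie in the rank-two localization at $\langle\alpha_1,\alpha_2\rangle$, which by Theorem \ref{b128} has at most $128$ positive roots; these form an $\cEs$-sequence of bounded length and therefore have bounded coordinates. Roots with $c=1$ are bounded via Proposition \ref{plane001}: its vertices $\gamma_\ell,\delta_\ell$ are sums of at most $2n\le 256$ terms $d_k\beta_k$ with $|d_k|\le 7$ by Theorem \ref{bound7} and $\beta_k$ a rank-two root whose coordinates are already bounded; by iterating Corollary \ref{convex_diff2} (which applies precisely because $\Vol_3(\alpha_1,\alpha_2,(a,b,1))=1$), one walks from any $(a,b,1)\in R^K_+$ down to $\alpha_3=(0,0,1)$ by successive subtractions of $\alpha_1$ or $\alpha_2$, forcing $(a,b)$ into the bounded region spanned by the vertices. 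Roots with $c\ge 2$ are handled by a mutation argument: under $\rho_3$ the third coordinate of $\gamma$ becomes $a\,|c^K_{3,1}|+b\,|c^K_{3,2}|-c$, and after a few iterations one reaches a chamber in which the layer index is $\le 1$ and the previous cases apply.

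For the inductive step $r>3$, assume the coordinate bound $M_{r-1}$ in lower ranks. Given $\gamma=\sum a_i\alpha_i\in R^K_+$: if some $a_j=0$, then $\gamma$ lies in a rank-$(r-1)$ localization and $(a_i)_{i\ne j}$ is bounded by $M_{r-1}$. If every $a_i>0$, apply Theorem \ref{sumofroots} to write $\gamma=\beta+\gamma'$ with $\beta,\gamma'\in R^K_+$ of strictly smaller height, and continue decomposing; the resulting tree has simple roots at its leaves, and at some internal node a summand has strictly smaller support than $\gamma$, whose coordinates are then controlled by $M_{r-1}$. Corollary \ref{boundvolm}, bounding $\Vol_2$ uniformly, additionally constrains how coefficients aggregate as one reassembles $\gamma$, which together with the support reductions produces a bound $M_r$.

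The principal obstacle is the base case $r=3$, and specifically the layer $c\ge 2$: the convexity-type tool of Corollary \ref{convex_diff2} requires $\Vol_3(\alpha_1,\alpha_2,\gamma)=1$ and so only addresses $c=1$ directly. The mutation-reduction argument above requires careful verification of termination, depending on $c^K_{3,1}$ and $c^K_{3,2}$ in a delicate way; this is the point at which the full strength of Theorem \ref{bound7} and the $\Vol_2$-bound of Corollary \ref{boundvolm} is most crucially used.
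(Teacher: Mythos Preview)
Your proposal has a genuine gap and misses a much shorter argument that the paper uses.

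The gap is in the base case $r=3$, layer $c\ge 2$. Your mutation formula is correct: passing to $\rho_3(K)$ replaces the third coordinate $c$ by $a\,|c^K_{3,1}|+b\,|c^K_{3,2}|-c$. But for a root with full support ($a,b\ge 1$) and moderate $c$, this quantity is typically \emph{larger} than $c$, not smaller; there is no monotone descent here, and the phrase ``after a few iterations one reaches a chamber in which the layer index is $\le 1$'' has no justification. Even if for each individual root you could find \emph{some} chamber in which its third coordinate is $\le 1$, different roots would require different chambers, so this does not produce a uniform coordinate bound in a single chamber. Your inductive step has a similar problem: knowing that somewhere in the decomposition tree a summand has smaller support only tells you that $\gamma$ is a sum of roots lying in proper localizations; it does not control how many summands appear, and the appeal to Corollary~\ref{boundvolm} to ``constrain how coefficients aggregate'' is left unexplained.

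The paper avoids all of this with a one-line pigeonhole argument. With $m$ the bound of Corollary~\ref{boundvolm}, consider the reduction map $\psi:R^K_+\to(\ZZ/(m{+}1)\ZZ)^r$. If $|R^K_+|>(m{+}1)^r$ then two distinct positive roots $\alpha,\beta$ collide modulo $m{+}1$, whence $(m{+}1)\mid\Vol_2(\alpha,\beta)$, contradicting $\Vol_2(\alpha,\beta)\le m$. So $|R^K_+|\le(m{+}1)^r$ uniformly, and then Theorem~\ref{sumofroots} (every non-simple positive root is a sum of two smaller ones) gives only finitely many possible sets $R^K_+\subseteq\ZZ^r$ once the simple roots are fixed as the standard basis. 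This bounds the number of roots directly, with no induction on rank, no layer-by-layer analysis, and no coordinate bound needed.
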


\begin{proof}
Let $K$ be a chamber of an irreducible crystallographic arrangement of rank $r>2$. Consider the map
\[ \psi : R^K_+ \rightarrow (\ZZ/(m+1)\ZZ)^r, \quad (a_1,\ldots,a_r) \mapsto (\overline{a_1},\ldots,\overline{a_r}). \]
Assume that $|R^K_+|>(m+1)^r$. Then there exist $\alpha,\beta\in R^K_+$, $\alpha\ne\beta$ and $\psi(\alpha)=\psi(\beta)$.
The volume $\Vol_2(\alpha,\beta)$ is divisible by $(m+1)$. Since $\alpha\ne\beta$, this contradicts Corollary~\ref{boundvolm}.\\
Hence there is a global bound for the number of positive roots. But the number of equivalence classes of irreducible crystallographic arrangements with bounded number of roots is bounded by Theorem \ref{sumofroots}.
\end{proof}

\def\cprime{$'$}
\providecommand{\bysame}{\leavevmode\hbox to3em{\hrulefill}\thinspace}
\providecommand{\MR}{\relax\ifhmode\unskip\space\fi MR }
\providecommand{\MRhref}[2]{%
  \href{http://www.ams.org/mathscinet-getitem?mr=#1}{#2}
}
\providecommand{\href}[2]{#2}

\end{document}